\title{Kronecker Comultiplication of Stable Characters and Restriction From $S_{mn}$ to $S_m \times S_n$}
\author{Christopher Ryba}
\address{Department of Mathematics, University of California, Berkeley, CA 94720, USA}
\email{ryba@math.berkeley.edu}
\DeclareMathOperator{\Id}{Id}
\newcommand{\Res}[2] {
  \textnormal{Res}_{#1}^{#2} 
}
\begin{document}
\maketitle
\begin{abstract}
A family of symmetric functions $\tilde{s}_\lambda$ was introduced in \cite{OrellanaZabrocki}, and independently in \cite{AssafSpeyer}. The $\tilde{s}_\lambda$ encode many stability properties of representations of symmetric groups (e.g. when multiplied, the structure constants are reduced Kronecker coefficients). We show that the structure constants for the Kronecker comultiplication $\Delta^*$ are multiplicities for the restriction of irreducible representations from $S_{mn}$ to $S_m \times S_n$ (provided $m$ and $n$ are sufficiently large), and use the structure of $\tilde{s}_\lambda$ to demonstrate two-row stability properties of these restriction multiplicities.

\end{abstract}
\newtheorem{theorem}{Theorem}[section]
\newtheorem{lemma}[theorem]{Lemma}
\newtheorem{proposition}[theorem]{Proposition}
\newtheorem{corollary}[theorem]{Corollary}
\newtheorem{definition}[theorem]{Definition}
\newtheorem{example}[theorem]{Example}
\newtheorem{remark}[theorem]{Remark}
\newtheorem{observation}[theorem]{Observation}

\section{Introduction}
\noindent
In this paper, we view elements of the symmetric group $S_n$ as permutation matrices. This realises $S_n$ as a subgroup of the general linear group $GL_n(\mathbb{C})$. We write $S^\mu$ for the irreducible complex representation of $S_n$ labelled by a partition $\mu$ of size $n$. We also write $\mathbb{S}^\lambda$ for the Schur functor indexed by the partition $\lambda$, so that the $\mathbb{S}^\lambda(\mathbb{C}^n)$ with $\lambda$ having at most $n$ rows are the irreducible polynomial representations of $GL_n(\mathbb{C})$. We may restrict $\mathbb{S}^\lambda(\mathbb{C}^n)$ to a representation of $S_n$. The restriction multiplicities are given by the following formula \cite{Gay} (see also Exercise 7.74 of \cite{Stanley}). Let $\mu$ be a partition of $n$, then
\begin{equation} \label{restriction_multiplicity_equation}
\dim \hom_{S_n} (S^\mu, \mathbb{S}^\lambda(\mathbb{C}^n)) = \langle s_\lambda, s_\mu[1 + h_1 + h_2 + \cdots] \rangle,
\end{equation}
where $s_\lambda, s_\mu$ are Schur functions, $h_r$ are the complete symmetric functions, $\langle -,- \rangle$ is the Hall inner product, and square brackets indicate plethysm. However, it is an open problem to find combinatorial formula for this restriction multiplicity.
\newline \newline \noindent
One approach to studying restriction from $GL_n(\mathbb{C})$ to $S_n$ is via the symmetric functions $\tilde{s}_\mu$ introduced in \cite{OrellanaZabrocki} and independently in \cite{AssafSpeyer}. The $\tilde{s}_\mu$ form a basis of the ring of symmetric functions, $\Lambda$. If $\mu[n] = (n-|\mu|, \mu)$ is the partition of $n$ obtained by adding a row of the appropriate length at the start of $\mu$ (defined for $n$ large enough), then the character of $S^{\mu[n]}$ on an element $g \in S_n$ is given by an evaluation of $\tilde{s}_\mu$:
\[
\chi^{\mu[n]}(g) = \tilde{s}_\mu(z_i),
\]
where $z_i$ are the eigenvalues of $g$ when viewed as a permutation matrix. Because the character of $\mathbb{S}^{\lambda}(\mathbb{C}^n)$ at $g$ is obtained by evaluating the Schur function $s_\lambda$ at the eigenvalues of $g$, it follows that the entries of the change-of-basis matrix between the Schur functions $s_\lambda$ and the $\tilde{s}_\mu$ are the multiplicities of $S^{\mu[n]}$ in the restriction of $\mathbb{S}^{\lambda}(\mathbb{C}^n)$ to $S_n$, provided $n$ is large enough. In particular, the multiplicity of $S^{\mu[n]}$ in $\mathbb{S}^\lambda(\mathbb{C}^n)$ is eventually constant as $n$ grows; this is one example of a \emph{stability phenomenon} in the representation theory of symmetric groups. The topic of stability phenomena is broad and deep; \cite{SamSnowden3} is an excellent reference.
\newline \newline \noindent
The Kronecker coefficient $k_{\mu, \nu}^\lambda$ is the multiplicity of $S^{\lambda}$ in $S^{\mu} \otimes S^{\nu}$. (It is a century-old open problem to give a combinatorial interpretation of the Kronecker coefficients.) Let $\tilde{k}_{\mu,\nu}^\lambda$ be the multiplicative structure constants of the $\tilde{s}_\lambda$:
\[
\tilde{s}_\mu \tilde{s}_\nu = \sum_{\lambda}\tilde{k}_{\mu, \nu}^\lambda \tilde{s}_\lambda.
\]
Evaluating at the eigenvalues of $g \in S_n$ (where $n$ is sufficiently large), we obtain
\[
\chi^{\mu[n]}(g) \chi^{\nu[n]}(g) = \sum_{\lambda} \tilde{k}_{\mu, \nu}^{\lambda} \chi^{\lambda[n]}(g),
\]
showing that $\tilde{k}_{\mu, \nu}^{\lambda} = k_{\mu[n], \nu[n]}^{\lambda[n]}$ for $n$ sufficiently large (see Theorem 7 of \cite{OrellanaZabrocki}). The fact that these tensor product multiplicities are eventually constant as $n$ grows is called \emph{stability of Kronecker coefficients} and is due to Murnaghan \cite{Murnaghan}. The stable limits $\tilde{k}_{\mu, \nu}^{\lambda}$ are called reduced Kronecker coefficients.
\newline \newline \noindent
The behaviour of $\tilde{s}_\lambda$ under the ``usual'' comultiplication $\Delta$ on $\Lambda$ (for which power-sum symmetric functions are primitive) was studied in \cite{OrellanaZabrocki2}, where it was shown that the structure constants correspond to stable versions of Littlewood-Richardson coefficients, encoding multiplicities for the restriction from $S_{m+n}$ to $S_m \times S_n$ for $m$ and $n$ both large. In the present paper, we instead study the behaviour of $\tilde{s}_\lambda$ under the Kronecker comultiplication $\Delta^*$ (for which power-sum symmetric functions are grouplike) defining structure constants $R_{\mu, \nu}^\lambda$ via
\[
\Delta^*(\tilde{s}_\lambda) = \sum_{\mu, \nu} R_{\mu, \nu}^{\lambda} \tilde{s}_{\mu} \otimes \tilde{s}_\nu.
\]
We find that these may be interpreted in the following way. If we write $[k] = \{1,2,\ldots,k\}$, then $[m] \times [n]$ is a set with $mn$ elements. There is an action of $S_m \times S_n$ where each symmetric group acts on the corresponding factor, and we obtain an embedding of $S_m \times S_n$ into $S_{mn}$. It turns out that $R_{\mu, \nu}^{\lambda}$ is the multiplicity of $S^{\mu[m]} \otimes S^{\nu[n]}$ in the restriction
\[
\Res{S_m \times S_n}{S_{mn}}(S^{\lambda[mn]}),
\]
provided that both $m$ and $n$ are large enough. The restriction from $S_{mn}$ to $S_m \times S_n$ appears in the theory of combinatorial species; see Exercise 4.4.9 (and following discussion) in \cite{GrinbergReiner} for an explanation of the connection to the arithmetic product of combinatorial species defined in \cite{MaiaMendez}.
\newline \newline \noindent
In Theorem \ref{R_stability_theorem}, we show that the $R_{\mu, \nu}^{\lambda}$ themselves exhibit a stability phenomenon: for any integers $a,b$ and partitions $\mu, \nu, \lambda$,
\[
R_{\mu[n-a], \nu[n-b]}^{\lambda[n]}
\]
is eventually constant as $n$ grows. Curiously, this implies a two-row stability property for the restriction multiplicities from $S_{pq}$ to $S_p \times S_q$: for any $a,b$ we have that
\[
\lim_{n \to \infty} \lim_{p,q \to \infty} \dim\hom_{S_p \times S_q}\left(S^{\mu[n-a][p]} \otimes S^{\nu[n-b][q]}, \Res{S_p \times S_q}{S_{pq}} \left( S^{\lambda[n][pq]} \right) \right)
\]
exists and is finite, where $\rho[r][s] = (r - |\rho|, \rho)[s] = (s-r, r-|\rho|, \rho)$.
\newline \newline \noindent
The paper is organised as follows. In Section 2, we review the properties of symmetric functions that we will require. Then we discuss the symmetric functions $\tilde{s}_\lambda$ in Section 3. In Section 4, we explain the interpretation of $R_{\mu,\nu}^{\lambda}$ in terms of restriction from $S_{mn}$ to $S_m \times S_n$. Finally, we prove the two-row stability property in Section 5.

\section*{Acknowledgements}
\noindent
The author would like to thank Pavel Etingof for Remark \ref{deligne_category_remark}, as well as Nate Harman, Darij Grinberg, and Victor Reiner for useful conversations.

\section{Symmetric Functions}
\noindent
The ring of symmetric functions, $\Lambda$, may be viewed as the ring of ``polynomials that are symmetric in infinitely many variables $x_1, x_2, \ldots$''. We direct the reader to Chapter 1 of \cite{Macdonald} for the precise construction of $\Lambda$ and as a general reference for this section.
\newline \newline \noindent
Recall that a partition is a finite, weakly-decreasing sequence of positive integers $\lambda = (\lambda_1, \lambda_2, \ldots, \lambda_m)$, where the $\lambda_i$ are called the \emph{parts} of $\lambda$. We write $|\lambda| = \sum_i \lambda_i$ for the \emph{size} of $\lambda$, and $l(\lambda) = m$ for the \emph{length} of $\lambda$. An alternative notation for partitions is to write $\lambda = (1^{m_1} 2^{m_2} \cdots)$, where $m_i$ is the number of times $i$ appears as a part of $\lambda$. We write $m_i(\lambda)$ if it us unclear which partition we are referring to. The \emph{dual} (or \emph{transpose}) partition to $\lambda$ is $\lambda^\prime$, which is defined by $\lambda_i^{\prime} = |\{j \mid \lambda_j \geq i\}|$. We have $|\lambda^\prime| = |\lambda|$ and $\lambda^{\prime \prime} = \lambda$. We will depict partitions with Young diagrams in English notation. For example, the partition $(5,4,1)$ is drawn as follows:

\begin{figure}[H]
\ytableausetup{centertableaux}
\ydiagram{5,4,1}.
\end{figure}
\noindent
As an algebra, $\Lambda = \mathbb{Z}[h_1, h_2, \ldots]$ is the free polynomial algebra generated by the complete symmetric functions $h_r$. Each $h_r$ has degree $r$, making $\Lambda$ into a graded algebra. Passing to rational coefficients, we have
\[
\mathbb{Q} \otimes_{\mathbb{Z}} \Lambda = \mathbb{Q}[p_1, p_2, \ldots],
\]
so that over $\mathbb{Q}$, $\Lambda$ is also freely generated by the power-sum symmetric functions $p_r = \sum_i x_i^r$ (having degree $r$). Hence, over $\mathbb{Q}$, $\Lambda$ has a basis
\[
p_\lambda = p_{\lambda_1} p_{\lambda_2} \cdots p_{\lambda_m}
\]
indexed by all partitions $\lambda$. The relation between the complete and power-sum symmetric functions is encoded in the following equality of generating functions in the variable $t$:
\[
\sum_{i \geq 0} h_i t^i = \prod_i \frac{1}{1-x_i t} = \exp\left( \sum_{i \geq 1} \frac{p_i}{i} t^i \right).
\]
We will actually want to evaluate at $t=1$, thereby obtaining an element of the completion of $\Lambda$ with respect to the grading; let
\[
H = \sum_{i \geq 0} h_i = \prod_i \frac{1}{1-x_i} = \exp\left( \sum_{i \geq 1} \frac{p_i}{i} \right).
\]
\newline \newline \noindent
However, $\Lambda$ also has $\mathbb{Z}$-basis given by the Schur functions $s_\lambda$ (which are also indexed by all partitions $\lambda$). A very important property of Schur functions is that they are characters of representations of the general linear groups. For $g \in GL_n(\mathbb{C})$, the trace of $g$ on $\mathbb{S}^\lambda(\mathbb{C}^n)$ is given by
\[
tr_{\mathbb{S}^{\lambda}(\mathbb{C}^n)}(g) = s_{\lambda}(z_1, z_2, \ldots, z_n),
\]
where $s_{\lambda}(z_1, z_2, \ldots, z_n)$ is the evaluation of the Schur function $s_{\lambda}$ at the eigenvalues $z_i$ of $g$ viewed as an $n \times n$ matrix.
\newline \newline \noindent
There is a symmetric bilinear form $\langle -,- \rangle$ on $\Lambda$ called the Hall inner product. It can be defined as follows: if $\mu, \nu$ are partitions, then
\[
\langle p_\mu, p_\nu \rangle = \delta_{\mu, \nu} z_{\mu},
\]
where if $\mu = (1^{m_1} 2^{m_2} \cdots)$, then $z_\mu = \prod_i m_i! i^{m_i}$, and $\delta_{\mu,\nu}$ is the Kronecker delta. The Schur functions are orthonormal for this inner product, so for any $f \in \Lambda$ we have the identity
\[
f = \sum_\lambda \langle f, s_\lambda \rangle s_\lambda.
\]
We write $f^\perp$ for the adjoint (with respect to $\langle -,- \rangle$) of multiplication by $f$. It turns out that 
\[
p_r^\perp = r \frac{\partial}{\partial p_r},
\]
where we view $\mathbb{Q} \otimes \Lambda = \mathbb{Q}[p_1, p_2, \ldots]$ (see \cite{Macdonald} Section 1.5, Example 3(c)). Since $p_r$ is homogeneous of degree $r$, $p_r^\perp$ is homogeneous of degree $-r$. We will use the following notation: if $\beta = (\beta_1, \beta_2, \ldots, \beta_m)$, then
\[
\frac{\partial}{\partial p_\beta} =  \frac{\partial}{\partial p_{\beta_1}} \frac{\partial}{\partial p_{\beta_2}} \cdots \frac{\partial}{\partial p_{\beta_m}},
\]
which is an operator of degree $-|\beta|$. We refer to the subalgebra of $\mathrm{End}_{\mathbb{Q}}(\mathbb{Q} \otimes \Lambda)$ generated by multiplication by symmetric functions $f$ as well as the adjoints $f^\perp$ as \emph{differential operators}. Expressing $f$ as a linear combination of the $p_\alpha$, it follows that any differential operator is a $\mathbb{Q}$-linear combination of $p_\alpha \frac{\partial}{\partial p_\beta}$ for partitions $\alpha, \beta$.
\newline \newline \noindent
Suppose $\mu$ and $\lambda$ are partitions such that the Young diagram of $\mu$ contains the Young diagram of $\lambda$. Recall that the complement $\mu-\lambda$ is called a \emph{rim hook} if it consists of a single connected component (vertical and horizontal boxes are adjacent, but diagonal ones are not) and contains no $2 \times 2$ square. The \emph{height} $ht(\mu-\lambda)$ is one less than the number of rows which $\mu-\lambda$ intersects. 
\begin{example}
Here are several examples of rim hooks (indicated in grey). Their heights are 0, 1, and 2 respectively.
\begin{figure}[H]
\centering
\ytableausetup{nosmalltableaux}
\begin{ytableau}
*(white) &*(white) &*(gray) &*(gray) \\
*(white)
\end{ytableau}
\hspace{10mm}
\begin{ytableau}
*(white) &*(white) &*(white) \\
*(white) & *(white) &*(gray) \\
*(white) &*(gray) &*(gray)
\end{ytableau}
\hspace{10mm}
\begin{ytableau}
*(white) &*(white) &*(gray) &*(gray) & *(gray) \\
*(white) & *(gray) & *(gray) \\
*(gray) &*(gray)
\end{ytableau}
\end{figure}
\end{example}
\noindent
The action of $p_r^\perp$ on Schur functions may be described combinatorially in terms of rim-hooks (\cite{Macdonald} Section 1.3, Example 11). We have 
\[
p_r s_{\lambda} = \sum_{\mu} (-1)^{ht(\mu-\lambda)} s_\mu
\]
where the sum is over all partitions $\mu$ such that the diagram $\mu-\lambda$ is a rim hook of size $r$. From this it follows that
\[
p_r^\perp s_{\mu} = \sum_{\lambda} (-1)^{ht(\mu - \lambda)} s_\lambda,
\]
where the sum is now over partitions $\lambda$ whose Young diagrams are obtained from the Young diagram of $\mu$ by removing a rim hook of size $r$.
\begin{observation} \label{rim_hook_observation}
Suppose that we wish to remove a rim hook of size $r$ from $\mu[n] = (n-|\mu|, \mu)$, where $n \geq 2|\mu| + r$. If this rim-hook intersects the top row, it must contain the final box in the top row, and hence must be entirely contained in the top row (since the top row has at least $|\mu|+r$ boxes, and the second row has at most $|\mu|$ boxes, $r$ boxes do not suffice to reach the second row) and the result is $\mu[n-r]$. If the rim hook does not intersect the top row, then the result is $\lambda[n-r]$, where $\lambda$ is obtained from $\mu$ by removing a rim hook of size $r$. For example, if $\mu = (2,2)$ here are the rim hooks of size 2 in the partition $\mu[10]$:
\begin{figure}[H]
\centering
\ytableausetup{nosmalltableaux}
\begin{ytableau}
*(white) &*(white) &*(white) &*(white) &*(gray) &*(gray) \\
*(white) &*(white) \\
*(white) &*(white)
\end{ytableau},
\hspace{10mm}
\begin{ytableau}
*(white) &*(white) &*(white) &*(white) &*(white) &*(white) \\
*(white) &*(gray) \\
*(white) &*(gray)
\end{ytableau},
\hspace{10mm}
\begin{ytableau}
*(white) &*(white) &*(white) &*(white) &*(white) &*(white) \\
*(white) &*(white) \\
*(gray) &*(gray)
\end{ytableau}.
\end{figure}
\noindent
The first rim hook is contained within the first row, and the remainder are contained in $\mu$. Note in particular that removing a rim hook of size $2$ from $\mu[n]$ for $n \geq 10$ would give identical results except the top rows would each have $n-10$ extra boxes.
\end{observation}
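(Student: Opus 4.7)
The plan is a direct case analysis on whether the rim hook $R$ being removed from $\mu[n]$ meets the first row. I would invoke the standard characterization of rim-hook removal: the cells of $R$ in each row it meets must form a contiguous rightmost segment ending at the last cell of that row of $\mu[n]$, the occupied rows are consecutive, and $R$ itself is connected with no $2\times 2$ block.

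In the first case, when $R$ meets row $1$, I would note that its row-$1$ cells form a suffix, say columns $j$ through $n-|\mu|$, and argue that $R$ cannot also enter row $2$. If it did, connectedness would force $(2, j) \in R$, hence $j \leq \mu_1 \leq |\mu|$, while $|R| = r$ forces $j \geq n - |\mu| - r + 1$. Combining these gives $n \leq 2|\mu| + r - 1$, contradicting the hypothesis $n \geq 2|\mu| + r$. So $R$ must be confined to row $1$, consist of its last $r$ cells, and its removal produces $(n-r-|\mu|, \mu) = \mu[n-r]$.

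For the second case, when $R$ avoids row $1$, I would observe that $R$ sits inside the rows-$\geq 2$ portion of $\mu[n]$, which is literally a shifted copy of the Young diagram of $\mu$. Under this identification, rim hooks of $\mu[n]$ disjoint from row $1$ are in natural bijection with rim hooks of $\mu$ of the same shape. If $\lambda$ denotes $\mu$ with $R$ removed, then removing $R$ from $\mu[n]$ leaves $(n-|\mu|, \lambda)$, which by $|\lambda| = |\mu|-r$ rewrites as $\lambda[n-r]$; this is a legitimate partition because $n-|\mu| \geq |\mu| + r \geq \lambda_1$.

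The main obstacle is really just the corner-turning exclusion in the first case; it is the sole place where the quantitative hypothesis $n \geq 2|\mu|+r$ enters, and without it a size-$r$ rim hook could indeed wrap from the long top row down into row $2$. Everything else is routine unpacking of the definition of rim-hook removal.
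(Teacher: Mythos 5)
Your proof is correct and follows essentially the same route as the paper's own (parenthetical) justification: a rim hook meeting row $1$ must contain the last box of that row, and the inequality $n \geq 2|\mu| + r$ prevents it from reaching down to row $2$, while a rim hook avoiding row $1$ lives entirely inside the embedded copy of $\mu$. Your column-index computation ($n-|\mu|-r+1 \leq j \leq \mu_1 \leq |\mu|$ yielding a contradiction) is just a more explicit rendering of the paper's counting argument, and the verification that the resulting shapes are genuine partitions is a welcome extra detail.
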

\noindent
The usual Hopf algebra structure on $\Lambda$ is as follows (\cite{Macdonald} Section 1.5 Example 25). The comultiplication $\Delta$ satisfies $\Delta(p_r) = p_r \otimes 1 + 1 \otimes p_r$. The counit $\varepsilon$ behaves as follows: $\varepsilon(p_\mu)$ is 1 if $\mu$ is the empty partition and zero otherwise. The antipode $S$ is a homomorphism satisfying $S(p_\mu) = (-1)^{l(\mu)}p_\mu$, and in terms of the Schur basis, $S(s_\lambda) = (-1)^{|\lambda|} s_{\lambda^\prime}$. In particular, this implies $S$ is an isometry:
\[
\langle S(f), S(g) \rangle = \langle f, g \rangle,
\]
and $S$ is an involution: $S^2 = \Id$. 
\begin{lemma} \label{antipode_adjoint_lemma}
We have $(-1)^{|\lambda|}S(p_\mu s_{\lambda^\prime}) = (-1)^{l(\mu)} p_\mu s_\lambda$.
\end{lemma}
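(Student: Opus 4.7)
The plan is to just unfold the two formulas already given for the antipode $S$ on the power-sum and Schur bases, using that $S$ is an algebra homomorphism.

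First I would write $S(p_\mu s_{\lambda'}) = S(p_\mu)\, S(s_{\lambda'})$ using the homomorphism property of $S$. Then substitute $S(p_\mu) = (-1)^{l(\mu)} p_\mu$ and $S(s_{\lambda'}) = (-1)^{|\lambda'|} s_{\lambda''} = (-1)^{|\lambda|} s_\lambda$, using $|\lambda'| = |\lambda|$ and $\lambda'' = \lambda$. This gives
\[
S(p_\mu s_{\lambda'}) = (-1)^{l(\mu) + |\lambda|} p_\mu s_\lambda.
\]

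Multiplying both sides by $(-1)^{|\lambda|}$ and using $(-1)^{2|\lambda|} = 1$ then yields the claim. The entire argument is a one-line consequence of the explicit formulas for the antipode on each basis plus its multiplicativity; there is no real obstacle here — the lemma is simply a packaging of these identities in a form convenient for later use (presumably in manipulations combining the rim-hook formula for $p_r^\perp$ with the antipode).
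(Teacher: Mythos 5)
Your argument is correct for the identity as literally printed, and indeed that version is an immediate consequence of $S$ being an algebra homomorphism together with $S(p_\mu)=(-1)^{l(\mu)}p_\mu$ and $S(s_{\lambda^\prime})=(-1)^{|\lambda|}s_\lambda$. However, the printed statement contains a typo: the lemma the paper actually proves, and the one it invokes in the proof of Proposition \ref{diff_op_proposition}, is
\[
(-1)^{|\lambda|}S(p_\mu^\perp s_{\lambda^\prime}) = (-1)^{l(\mu)} p_\mu^\perp s_\lambda,
\]
with the adjoint operators $p_\mu^\perp$ rather than multiplication by $p_\mu$. For that version your approach breaks down at the very first step: $p_\mu^\perp s_{\lambda^\prime}$ is not a product of two elements of $\Lambda$, so multiplicativity of $S$ says nothing about it. The paper instead pairs $(-1)^{|\lambda|}S(p_\mu^\perp s_{\lambda^\prime})$ against an arbitrary $f$, uses that $S$ is an isometric involution (hence self-adjoint for the Hall form) to move $S$ and $p_\mu^\perp$ to the other side of the pairing, applies the two antipode formulas there, and concludes by nondegeneracy of the Hall inner product.

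That said, your computation is not wasted: the identity you proved says $S\circ(p_\mu\cdot)=(-1)^{l(\mu)}(p_\mu\cdot)\circ S$ as operators on $\Lambda$, and taking adjoints of both sides (using $S^\ast=S$) gives $p_\mu^\perp\circ S=(-1)^{l(\mu)}S\circ p_\mu^\perp$; applying this to $s_{\lambda^\prime}$ and using $S(s_{\lambda^\prime})=(-1)^{|\lambda|}s_\lambda$ recovers the intended statement. So one additional step --- passing to adjoints --- closes the gap, and is essentially the paper's proof written in inner-product form.
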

\begin{proof}
It easily follows that
\begin{eqnarray*}
\langle (-1)^{|\lambda|} S(p_\mu^\perp s_{\lambda^\prime}), f \rangle &=& (-1)^{|\lambda|} \langle s_{\lambda^\prime}, p_\mu S(f) \rangle \\
&=& (-1)^{|\lambda|} \langle S(s_{\lambda^\prime}), S(p_\mu) f \rangle \\
&=& \langle s_\lambda, (-1)^{l(\mu)} p_\mu f \rangle \\
&=& \langle (-1)^{l(\mu)} p_\mu^\perp s_\lambda, f \rangle,
\end{eqnarray*}
and the lemma follows from the non-degeneracy of the Hall inner product.
\end{proof}
\noindent
The comultiplication may be interpreted in the following way. An element of $\Lambda \otimes \Lambda$ may be viewed as a function that is symmetric in two sets of variables $x_i$ and $y_j$ separately (one for each tensor factor of $\Lambda$). So we may obtain an element of $\Lambda \otimes \Lambda$ by evaluating an element of $\Lambda$ at the variable set $\{x_i\} \cup \{y_j\}$. It turns out that the Hall inner product is a \emph{Hopf pairing}, meaning that if we equip $\Lambda \otimes \Lambda$ with the form 
\[
\langle a \otimes b, c \otimes d\rangle  = \langle a,c \rangle \langle b,d \rangle,
\]
then we have
\[
\langle f, gh \rangle = \langle \Delta(f), g \otimes h \rangle.
\]
At one point we will want to compute $\Delta(f)$ in the following way. 
\begin{lemma}[\cite{Macdonald}, Section 1.5, Example 25(b)] \label{standard_coproduct_formula_lemma}
For any $f \in \Lambda$ we have
\[
\Delta(f) = \sum_\mu p_\mu^\perp f \otimes \frac{p_\mu}{z_\mu}.
\]
\end{lemma}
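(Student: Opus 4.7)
The plan is to verify the identity by pairing both sides against an arbitrary elementary tensor $g \otimes h$ using the Hopf pairing property stated just above the lemma, together with the non-degeneracy of the Hall inner product on $\Lambda \otimes \Lambda$. Since both sides of the claimed formula live in $\Lambda \otimes \Lambda$, it suffices to show that they have the same inner product against every $g \otimes h$.

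For the left-hand side, the Hopf pairing property immediately gives $\langle \Delta(f), g \otimes h \rangle = \langle f, gh \rangle$. For the right-hand side, I would use bilinearity and the definition of the inner product on a tensor product to write
\[
\Bigl\langle \sum_\mu p_\mu^\perp f \otimes \tfrac{p_\mu}{z_\mu},\ g \otimes h \Bigr\rangle = \sum_\mu \langle p_\mu^\perp f, g \rangle \Bigl\langle \tfrac{p_\mu}{z_\mu}, h \Bigr\rangle.
\]
The first factor simplifies via the adjoint property, $\langle p_\mu^\perp f, g \rangle = \langle f, p_\mu g \rangle$. The key auxiliary fact is that $\{p_\mu\}$ and $\{p_\mu/z_\mu\}$ are dual bases of $\mathbb{Q} \otimes \Lambda$ with respect to the Hall inner product, which follows directly from $\langle p_\mu, p_\nu \rangle = \delta_{\mu,\nu}z_\mu$. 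Consequently, any $h \in \mathbb{Q} \otimes \Lambda$ expands as $h = \sum_\mu \langle h, p_\mu/z_\mu \rangle\, p_\mu$.

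Substituting this expansion, the right-hand pairing becomes
\[
\sum_\mu \langle f, p_\mu g \rangle \Bigl\langle \tfrac{p_\mu}{z_\mu}, h \Bigr\rangle = \Bigl\langle f,\ g \sum_\mu \Bigl\langle \tfrac{p_\mu}{z_\mu}, h \Bigr\rangle p_\mu \Bigr\rangle = \langle f, gh \rangle,
\]
matching the left-hand side. Since $g$ and $h$ were arbitrary and the Hall pairing on $\Lambda \otimes \Lambda$ is non-degenerate, the two elements of $\Lambda \otimes \Lambda$ coincide. Strictly speaking the manipulations are over $\mathbb{Q}$, so one should note that both sides of the lemma a priori lie in $\Lambda$ (integrally), making the rational identification harmless.

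There is no serious obstacle here; the main subtlety is just the bookkeeping of the duality $\langle p_\mu, p_\nu/z_\nu \rangle = \delta_{\mu,\nu}$ and the observation that, because $\Delta(f)$ can have infinitely many nonzero homogeneous components in each bidegree only if $f$ does, the sum on the right makes sense termwise (for $f$ homogeneous, only finitely many $\mu$ contribute).
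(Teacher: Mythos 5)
Your argument is correct: pairing both sides against an arbitrary $g \otimes h$, using the Hopf pairing property $\langle \Delta(f), g\otimes h\rangle = \langle f, gh\rangle$ together with the adjointness $\langle p_\mu^\perp f, g\rangle = \langle f, p_\mu g\rangle$ and the duality $\langle p_\mu/z_\mu, p_\nu\rangle = \delta_{\mu,\nu}$, reduces the identity to the expansion $h = \sum_\mu \langle h, p_\mu/z_\mu\rangle p_\mu$, and non-degeneracy of the pairing on $\Lambda\otimes\Lambda$ finishes the job. The paper gives no proof of this lemma (it is cited from Macdonald, Section 1.5, Example 25(b)), and your derivation is exactly the standard one used there, so there is nothing further to compare.
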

\noindent
We will need to consider a different comultiplication, $\Delta^*$, obeying
\[
\Delta^*(p_\mu) = p_\mu \otimes p_\mu,
\]
which together with the counit, $\varepsilon^*$, satisfying $\varepsilon^*(p_\mu) = 1$, makes $\Lambda$ into a bialgebra (\cite{Macdonald} Section 1.7, Example 20). Analogously to $\Delta$, the comultiplication $\Delta^*$ may be described as evaluating at the variable set $\{x_i y_j\}$ (consisting of all products of the variables $x_i$ and $y_j$) to obtain an element of $\Lambda \otimes \Lambda$. In terms of Schur functions, we have
\[
\Delta^*(s_\lambda) = \sum_{|\mu| = |\nu| = |\lambda|} k_{\mu, \nu}^\lambda s_\mu \otimes s_\nu,
\]
where $k_{\mu, \nu}^\lambda$ are Kronecker coefficients. Hence, we call $\Delta^*$ the \emph{Kronecker comultiplication}.
\newline \newline \noindent
Finally in our review of symmetric functions, we will need \emph{plethysm}. Given two symmetric functions $f$ and $g$, the plethysm of $f[g]$ is the symmetric function obtained by evaluating $f$ at the variable set $\{z_i\}$, where $z_i$ are the monomials appearing in $g$ counted with multiplicity. (This definition may be extended to the case where the coefficients of monomials are not non-negative integers, although we omit it here.) Since the monomials in $p_1 = \sum_i x_i$ are the variables $x_i$ themselves, $f[p_1] = f$, and so $p_1$ is the identity for the plethysm operation. Another example is
\[
p_{r_1}[p_{r_2}] = \sum_i (x_i^{r_2})^{r_1} = p_{r_1r_2},
\]
though in general plethysm is difficult to compute (it is an open problem to give a combinatorial intepretation of $\langle s_\lambda, s_{\mu}[s_\nu]\rangle$). Once $g$ is fixed, the mapping $f \to f[g]$ is an algebra homomorphism $\Lambda \to \Lambda$. If $f$ and $g$ are homogeneous, then $\deg(f[g]) = \deg(f) \deg(g)$.
\newline \newline \noindent
The multiset of monomials in $g + h$ is the union of the monomials in $g$ and those in $h$, which means that $f[g+h]$ may be computed by taking the comultiplication $\Delta(f) = \sum f^{(0)} \otimes f^{(1)}$ (where we use Sweedler notation), and then taking the plethysm with $g$ and $h$ in the separate components and multiplying:
\[ 
f[g+h] = \sum f^{(0)}[g] f^{(1)}[h].
\]
This extends directly to larger sums by taking successive comultiplications of $f$. In particular, if $g$ is not homogeneous, we may decompose $g = g_0 + g_1 + \cdots + g_d$ where $\deg(g_i) = i$ and $d$ is the degree of $g$. Then to compute $f[g]$, we may apply the comultiplication $\Delta$ $d$ times to $f$ (i.e. take $\Delta^d(f) = (\Delta \otimes \Id^{\otimes d}) \circ \cdots \circ (\Delta \otimes \Id) \circ \Delta$) to get an element of $\Lambda^{\otimes (d+1)}$, and the evaluate the $i$-th tensor factor at the monomials of $g_i$. Explicitly, let us write
\[
\Delta^{d}(f) = \sum f^{(0)} \otimes f^{(1)} \otimes \cdots \otimes f^{(d)},
\]
where Sweedler notation lets us assume that within each summand each $f^{(i)}$ is homogeneous (different summands may have different degrees of $f^{(i)}$). Then we conclude
\[
f[g] = \sum f^{(0)}[g_0] f^{(1)}[g_1] \cdots f^{(d)}[g_d].
\]
What will be essential for us is the operation adjoint to plethysm with $g$, which we denote $f[g^\perp]$.
\begin{proposition} \label{plethysm_formula_proposition}
Let $g = g_0 + g_1 + \cdots + g_d$. Then the adjoint operation to plethysm with $g$ satisfies
\[
f[g^\perp] = \sum f^{(0)}[g_0^\perp] f^{(1)}[g_1^\perp] \cdots f^{(d)}[g_d^\perp],
\]
where as before, $\Delta^{d}(f) = \sum f^{(0)} \otimes f^{(1)} \otimes \cdots \otimes f^{(d)}$ in Sweedler notation.
\end{proposition}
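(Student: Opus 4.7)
The plan is to view plethysm with $g$ as a composite of linear maps on the Hopf algebra $\Lambda$, take the adjoint under the Hall inner product, and invoke the self-duality of $\Lambda$: the Hopf pairing property $\langle f, gh \rangle = \langle \Delta(f), g \otimes h \rangle$ stated earlier says precisely that comultiplication and multiplication are mutually adjoint with respect to the tensor-product inner product $\langle a \otimes b, c \otimes d \rangle = \langle a, c \rangle \langle b, d \rangle$.

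Concretely, iterating the previously stated identity $f[g' + g''] = \sum f^{(0)}[g'] f^{(1)}[g'']$ factors plethysm with $g = g_0 + g_1 + \cdots + g_d$ as
\[
T_g \;=\; m^{(d)} \circ (T_{g_0} \otimes T_{g_1} \otimes \cdots \otimes T_{g_d}) \circ \Delta^{d},
\]
where $T_{g_i}(f) = f[g_i]$, $\Delta^{d} \colon \Lambda \to \Lambda^{\otimes (d+1)}$ is iterated comultiplication, and $m^{(d)} \colon \Lambda^{\otimes(d+1)} \to \Lambda$ is iterated multiplication. Taking Hall-adjoints of this composite using $(AB)^* = B^*A^*$ and $(A \otimes B)^* = A^* \otimes B^*$, together with the identities $\Delta^* = m$ and $m^* = \Delta$ (iterated to $d$ factors), yields
\[
T_g^* \;=\; m^{(d)} \circ (T_{g_0}^* \otimes \cdots \otimes T_{g_d}^*) \circ \Delta^{d}.
\]
Unpacking this with Sweedler notation $\Delta^{d}(f) = \sum f^{(0)} \otimes \cdots \otimes f^{(d)}$ and the defining convention $T_{g_i}^*(f^{(i)}) = f^{(i)}[g_i^\perp]$ produces the stated formula.

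The only step requiring any care is verifying the adjoint of a tensor product factors as $(A \otimes B)^* = A^* \otimes B^*$; this is immediate from bilinearity and the non-degeneracy of the Hall pairing on each graded piece, so there is no genuine obstacle. The substance of the proposition is the Hopf-algebraic observation that the plethysm-with-a-sum formula is formally self-adjoint under the symmetry of $\Lambda$ that exchanges $\Delta$ and $m$.
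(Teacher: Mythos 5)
Your argument is correct and is essentially the paper's proof in operator form: the paper carries out the same chain of adjunctions element-wise, computing $\langle f[g^\perp],h\rangle=\langle f,h[g]\rangle$, expanding $h[g]$ via the plethysm-with-a-sum formula, and applying the Hopf pairing property twice, which is exactly your identity $T_g^\ast=m^{(d)}\circ(T_{g_0}^\ast\otimes\cdots\otimes T_{g_d}^\ast)\circ\Delta^{d}$ unpacked against a test element $h$. One cosmetic caution: avoid writing $\Delta^\ast$ for the Hall-adjoint of $\Delta$, since the paper reserves that symbol for the Kronecker comultiplication.
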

\begin{proof}
For any $h \in \Lambda$,
\begin{eqnarray*}
\langle f[g^\perp], h \rangle &=& \langle f, h[g] \rangle \\
&=& \langle f, \sum h^{(0)}[g_0] h^{(1)}[g_1] \cdots h^{(d)}[g_d] \rangle \\
&=&\langle \Delta^d(f), \sum h^{(0)}[g_0] \otimes  h^{(1)}[g_1] \otimes \cdots \otimes h^{(d)}[g_d] \rangle   \hspace{10mm} \mbox{(using the Hopf pairing property) } \\
&=&\langle \sum f^{(0)} \otimes f^{(1)} \otimes \cdots \otimes f^{(d)}, \sum h^{(0)}[g_0] \otimes  h^{(1)}[g_1] \otimes \cdots \otimes h^{(d)}[g_d] \rangle\\
&=&\langle \sum f^{(0)}[g_0^\perp] \otimes f^{(1)}[g_1^\perp] \otimes \cdots \otimes f^{(d)}[g_d^\perp], \sum h^{(0)} \otimes  h^{(1)} \otimes \cdots \otimes h^{(d)} \rangle \\
&=&\langle \sum f^{(0)}[g_0^\perp] \otimes f^{(1)}[g_1^\perp] \otimes \cdots \otimes f^{(d)}[g_d^\perp], \sum \Delta^d(h) \rangle \\
&=&\langle \sum f^{(0)}[g_0^\perp] f^{(1)}[g_1^\perp] \cdots f^{(d)}[g_d^\perp], h \rangle    \hspace{20mm} \mbox{(using the Hopf pairing property again) }.
\end{eqnarray*}
The proposition now follows from the nondegeneracy of the Hall inner product.
\end{proof}
\noindent
Let us restrict ourselves to the case where $g_i$ is homogeneous of degree $i$ and $g_0 = 0$. Then $f^{(0)}[g_0^\perp]$ is zero if $f^{(0)}$ is homogeneous of positive degree, and equal to $f^{(0)}$ if $\deg(f^{(0)}) = 0$. Since $g_i$ has degree $i$, $f_i[g_i^\perp]$ is zero unless $\deg(f_i)$ is a multiple of $i$. So the degree of $f^{(0)}[g_0^\perp] f^{(1)}[g_1^\perp] \cdots f^{(d)}[g_d^\perp]$ is
\[
\frac{\deg(f^{(1)})}{\deg(g_1)}+ \frac{\deg(f^{(2)})}{\deg(g_2)}+ \cdots + \frac{\deg(f^{(d)})}{\deg(g_d)} = \frac{\deg(f^{(1)})}{1}+ \frac{\deg(f^{(2)})}{2}+ \cdots + \frac{\deg(f^{(d)})}{d}.
\]
If $g_i$ is zero, then plethysm with $g_i$ annihilates positive degree symmetric functions. So in this case $f^{(i)}[g_i^\perp]$ is zero unless $\deg(f^{(i)}) = 0$, meaning we may omit $\frac{\deg(f^{(i)})}{i}$ from the above sum whenever $g_i = 0$.
\begin{observation} \label{skew_plethysm_degree_observation}
If $g_0 = g_1 = 0$, then $f[g^\perp]$ is a sum of terms of degree
\[
\frac{\deg(f^{(2)})}{2} + \cdots + \frac{\deg(f^{(d)})}{d} \leq \frac{\deg(f^{(2)})}{2} + \cdots + \frac{\deg(f^{(d)})}{2} \leq \frac{\deg(f)}{2},
\]
and hence $f[g^\perp]$ is (possibly inhomogeneous) of degree at most $\deg(f)/2$.
\end{observation}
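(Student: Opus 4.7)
The observation is essentially a direct consequence of Proposition \ref{plethysm_formula_proposition} together with the degree bookkeeping performed in the preceding paragraph, so my plan is to formalize that bookkeeping.

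First, I would reduce to the case where $f$ is homogeneous, since $f[g^\perp]$ is linear in $f$ and degree is checked componentwise. Given homogeneous $f$, I would invoke Proposition \ref{plethysm_formula_proposition} to write
\[
f[g^\perp] = \sum f^{(0)}[g_0^\perp]\, f^{(1)}[g_1^\perp] \cdots f^{(d)}[g_d^\perp],
\]
where within each Sweedler summand each $f^{(i)}$ is homogeneous, and (because $\Delta$ is a graded coalgebra morphism) $\sum_{i=0}^{d}\deg(f^{(i)}) = \deg(f)$.

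Next I would analyse each factor. Since plethysm with $g_i$ is an algebra homomorphism sending $1 \mapsto 1$ and multiplies degrees by $\deg(g_i)$, the adjoint operation $(-)[g_i^\perp]$ divides degrees by $\deg(g_i)$ when nonzero, and sends $1$ to $1$. Thus if $g_i = 0$ for some $i$, then $f^{(i)}[g_i^\perp]$ vanishes unless $\deg(f^{(i)}) = 0$, in which case $f^{(i)}$ is a scalar and the factor contributes $\deg(f^{(i)}) = 0$. Applying this with $g_0 = g_1 = 0$ kills all Sweedler summands except those in which $\deg(f^{(0)}) = \deg(f^{(1)}) = 0$. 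In the remaining summands, for $i \geq 2$ the factor $f^{(i)}[g_i^\perp]$ is either zero or homogeneous of degree $\deg(f^{(i)})/i$.

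Finally, combining these facts, each nonzero summand has total degree
\[
\sum_{i=2}^{d}\frac{\deg(f^{(i)})}{i} \;\leq\; \sum_{i=2}^{d}\frac{\deg(f^{(i)})}{2} \;=\; \frac{1}{2}\sum_{i=0}^{d}\deg(f^{(i)}) \;=\; \frac{\deg(f)}{2},
\]
where the second equality uses $\deg(f^{(0)}) = \deg(f^{(1)}) = 0$. Since every nonzero summand is bounded by $\deg(f)/2$, the whole expression has degree at most $\deg(f)/2$, proving the observation. There is no real obstacle here: the only mildly subtle point is confirming that $(-)[g_i^\perp]$ genuinely divides degrees by $i$ on its image, which is immediate from the fact that plethysm with a homogeneous $g_i$ of degree $i$ sends degree-$k$ elements to degree-$ik$ elements and that this action is self-adjointly consistent with the Hall pairing.
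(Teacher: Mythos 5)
Your proof is correct and follows essentially the same route as the paper, which justifies this observation by the degree bookkeeping in the paragraph immediately preceding it (based on Proposition \ref{plethysm_formula_proposition}, the vanishing of $f^{(i)}[g_i^\perp]$ for $g_i = 0$ unless $\deg(f^{(i)}) = 0$, and the fact that $(-)[g_i^\perp]$ divides degrees by $\deg(g_i)$). Your additional observations --- reducing to homogeneous $f$, and noting that $\sum_i \deg(f^{(i)}) = \deg(f)$ forces the second inequality to be an equality on the surviving summands --- are accurate refinements of the same argument.
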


\begin{remark}
Continue to assume that $f^{(0)}$ is homogeneous. In the case where $g_0$ is a nonzero constant $m$,
\begin{eqnarray*}
f^{(0)}[g_0^\perp] &=& \sum_\lambda \langle f^{(0)}[g_0^\perp], s_\lambda \rangle s_\lambda \\
&=& \sum_\lambda \langle f^{(0)}, s_\lambda(\underbrace{1,1, \ldots, 1}_{m\rm\ times}) \rangle s_\lambda,
\end{eqnarray*}
and the inner product is zero unless $\deg(f^{(0)}) = 0$. If $f^{(0)}$ is a constant, this sum becomes
\[
\sum_{\lambda} f^{(0)} s_\lambda(\underbrace{1,1, \ldots, 1}_{m\rm\ times}) s_{\lambda} = f^{(0)} \prod_{i \geq 1} \prod_{j=1}^m \frac{1}{1-x_i} = f^{(0)} H^m
\]
where we used the \emph{Cauchy identity}
\[
\sum_\lambda s_{\lambda}(x_i) s_{\lambda}(y_j) = \prod_{i,j} \frac{1}{1-x_iy_j}.
\]
In Equation \ref{restriction_multiplicity_equation}, the multiplicity of $S^\mu$ in the restriction of $\mathbb{S}^\lambda(\mathbb{C}^n)$ is given by
$\langle s_\lambda, s_\mu[H] \rangle$. This means that the restriction of $\mathbb{S}^\lambda(\mathbb{C}^n)$ to the symmetric group $S_n$ is determined by how the degree $n$ component of $s_\lambda[H^\perp]$ decomposes in terms of Schur functions $s_\mu$. Since $H$ has nonzero constant term, applying Proposition \ref{plethysm_formula_proposition} requires this calculation.
\end{remark}
\begin{definition}
The $n$-th \emph{Lyndon symmetric function} is 
\[
L_n = \frac{1}{n} \sum_{d | n} \mu(n/d) p_d,
\]
where $\mu$ is the M\"{o}bius function. The \emph{total Lyndon symmetric function}
\[
L = \sum_{n \geq 1} L_n,
\]
which is an element of the completion of $\Lambda$ with respect to the grading.
\end{definition}
\noindent
One way that $L$ arises is as the $GL(V)$ character of the free Lie algebra on $V$ (\cite{Macdonald} Section 1.7, Example 12). We observe that $L$ has no constant term, and we may write it as $L = p_1 + L_{\geq 2}$, where $L_{\geq 2} = \sum_{n \geq 2} L_n$ only contains terms of degrees 2 and greater. Hence Observation \ref{skew_plethysm_degree_observation} gives us the following.
\begin{observation} \label{lyndon_specific_observation}
We have the formula
\[
f[L^\perp] = \sum f^{(0)}[p_1^\perp] \cdot f^{(1)}[L_{\geq 2}^\perp] = \sum f^{(0)} \cdot (f^{(1)}[L_{\geq 2}^\perp]),
\]
where $f^{(1)}[L_{\geq 2}^\perp]$ is a (possibly inhomogeneous) symmetric function of degree at most $\deg(f^{(1)})/2$.
\end{observation}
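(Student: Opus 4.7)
My plan is to apply Proposition \ref{plethysm_formula_proposition} to the two-piece decomposition $L = p_1 + L_{\geq 2}$. Although that proposition is stated in terms of the homogeneous decomposition $g = g_0 + g_1 + \cdots + g_d$, its proof rests only on the identity $h[g + h'] = \sum h^{(0)}[g] \, h^{(1)}[h']$ and the Hopf-pairing property, neither of which requires the summands to be homogeneous. The identical argument therefore yields, for the two-summand decomposition,
\[
f[L^\perp] = \sum f^{(0)}[p_1^\perp] \, f^{(1)}[L_{\geq 2}^\perp],
\]
where now $\sum f^{(0)} \otimes f^{(1)}$ refers to a single application of the comultiplication $\Delta$.

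Next I would establish $f[p_1^\perp] = f$ for every $f \in \Lambda$, which will collapse the first factor. Since $p_1 = \sum_i x_i$ has monomials exactly the $x_i$, plethysm with $p_1$ on the right is the identity, i.e.\ $h[p_1] = h$. Taking adjoints, for every $f, h \in \Lambda$,
\[
\langle f[p_1^\perp], h \rangle = \langle f, h[p_1] \rangle = \langle f, h \rangle,
\]
so non-degeneracy of the Hall inner product forces $f[p_1^\perp] = f$. Substituting this into the formula above gives the second form $\sum f^{(0)} \cdot (f^{(1)}[L_{\geq 2}^\perp])$.

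The degree bound then falls straight out of Observation \ref{skew_plethysm_degree_observation}. By construction $L_{\geq 2} = \sum_{n \geq 2} L_n$ has no constant term and no degree-one term, so the hypotheses of that observation are met with $g = L_{\geq 2}$. Applied term-by-term to each homogeneous component of $f^{(1)}$, it shows that $f^{(1)}[L_{\geq 2}^\perp]$ is (possibly inhomogeneous) of degree at most $\deg(f^{(1)})/2$.

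I do not expect a real obstacle: the observation is essentially a packaging together of Proposition \ref{plethysm_formula_proposition}, the trivial action of $p_1^\perp$, and Observation \ref{skew_plethysm_degree_observation}. The only mild subtlety worth flagging in a write-up is the very first step, namely justifying that Proposition \ref{plethysm_formula_proposition} remains valid for the inhomogeneous summand $L_{\geq 2}$; this is handled by a one-line remark that the proof of that proposition nowhere used homogeneity of the $g_i$.
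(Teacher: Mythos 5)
Your proposal is correct and follows essentially the same route the paper intends: decompose $L = p_1 + L_{\geq 2}$, apply the adjoint-plethysm formula of Proposition \ref{plethysm_formula_proposition} (noting its proof never used homogeneity of the summands), collapse $f^{(0)}[p_1^\perp] = f^{(0)}$ since $p_1$ is the plethysm identity, and invoke Observation \ref{skew_plethysm_degree_observation} for the degree bound. The paper treats all of this as immediate, so your extra care with the adjointness argument for $f[p_1^\perp]=f$ and with the inhomogeneity of $L_{\geq 2}$ only makes the write-up more complete.
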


\section{Stable Characters}
\noindent
In \cite{OrellanaZabrocki}, the \emph{irreducible character basis} $\tilde{s}_\lambda$ of $\Lambda$ was defined. These symmetric functions were independently studied in \cite{AssafSpeyer} under the name \emph{stable Specht polynomials} and notation $s_\lambda^\dagger$. In the present paper we use the notation $\tilde{s}_\lambda$ because we continue the study of these functions under the Hopf algebra structure of $\Lambda$ initiated in \cite{OrellanaZabrocki2}. The $\tilde{s}_\lambda$ also appear in Section 10 of \cite{Ryba1} and implicitly in Section 8 of \cite{Tosteson}. The author also provided a categorification of $\tilde{s}_\lambda$ in \cite{Ryba2}, connected to the category $\mathrm{Rep}(S_\infty)$ studied by Sam and Snowden \cite{SamSnowden2, SamSnowden1}.
\newline \newline \noindent
Recall that we view $S_n$ as a subgroup of $GL_n(\mathbb{C})$, so we may speak of the eigenvalues of permutations, and also that $\lambda[n]$ denotes $(n-|\lambda|, \lambda)$, which is a partition for $n$ large enough.
\begin{proposition}
There is a $\mathbb{Z}$-module basis of $\Lambda$, $\tilde{s}_\lambda$, indexed by all partitions $\lambda$. These symmetric functions obey the following properties:
\begin{itemize}
\item[1.] $\tilde{s}_\lambda$ is inhomogeneous of degree $|\lambda|$ and the leading order term is the Schur function $s_\lambda$.
\item[2.] If $n \geq 2|\lambda|$, then evaluating $\tilde{s}_\lambda$ at the eigenvalues of $g \in S_n$ gives the character value $\chi^{\lambda[n]}(g)$.
\item[3.] We have the formula
\[
\tilde{s}_\lambda = \sum_{\mu} (-1)^{|\lambda|-|\mu|} \langle s_{\lambda^\prime}, s_{\mu^\prime}[L]H \rangle s_\mu.
\]
\end{itemize}
\end{proposition}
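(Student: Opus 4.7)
The plan is to take the formula in property 3 as the definition of $\tilde{s}_\lambda$ and deduce properties 1 and 2 from it. First I would verify the sum is finite: since $L$ has no constant term, $s_{\mu'}[L]$ has minimum degree $|\mu|$, so $s_{\mu'}[L]H$ has minimum degree $|\mu|$ as well, and pairing against the homogeneous $s_{\lambda'}$ forces $|\mu|\leq|\lambda|$.

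Property 1 then falls out by reading off the top-degree piece. For $\mu$ with $|\mu|=|\lambda|$, the degree-$|\lambda|$ component of $s_{\mu'}[L]H$ comes from the degree-$|\mu|$ part of $s_{\mu'}[L]$, which is $s_{\mu'}[L_1]=s_{\mu'}[p_1]=s_{\mu'}$, multiplied by $H_0=1$. Orthonormality of the Schur basis collapses this to $\delta_{\lambda,\mu}$, so the top-degree term of $\tilde{s}_\lambda$ is $s_\lambda$. Triangularity with respect to partition size then yields the $\mathbb{Z}$-basis property (modulo an auxiliary integrality check on the coefficients).

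The main content is property 2. Using (\ref{restriction_multiplicity_equation}), the value of $s_\mu$ at the eigenvalues of $g\in S_n$ equals $\sum_{\rho\vdash n}\langle s_\mu, s_\rho[H]\rangle\chi^\rho(g)$, so substituting into the definition of $\tilde{s}_\lambda$ and comparing with $\chi^{\lambda[n]}(g)$ reduces the claim to the identity
\[
\sum_{\mu}(-1)^{|\lambda|-|\mu|}\langle s_{\lambda'}, s_{\mu'}[L]H\rangle\,\langle s_\mu, s_\rho[H]\rangle \;=\; \delta_{\rho,\lambda[n]},
\]
for every partition $\rho$ of $n$ with $n\geq 2|\lambda|$. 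I would attack this via Hopf-algebraic manipulation: apply adjointness of plethysm to rewrite $\langle s_\mu, s_\rho[H]\rangle = \langle s_\mu[H^\perp], s_\rho\rangle$, collapse the $\mu$-sum using Schur completeness, and then use Lemma~\ref{antipode_adjoint_lemma} to absorb the sign $(-1)^{|\lambda|-|\mu|}$ and the transposes $s_{\mu'},s_{\lambda'}$ into an antipode acting on $s_\lambda$, so that the $L$-plethysm aligns with the $H^\perp$ coming from $s_\rho[H]$. Proposition~\ref{plethysm_formula_proposition} then governs how $[L]$ and $[H^\perp]$ compose.

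The principal obstacle is the final reduction of this expression to $\delta_{\rho,\lambda[n]}$, which is precisely where the hypothesis $n\geq 2|\lambda|$ must be used. Observation~\ref{rim_hook_observation} says that any rim hook of $\lambda[n]$ either lies entirely in the first row or entirely inside $\lambda$, and this dichotomy matches the factorization of the inversion kernel $s_{\mu'}[L]H$ into an $L$-factor (bookkeeping rim-hook removals inside $\mu$) and an $H$-factor (absorbing the long first row). Checking that the signs, the antipode, and the $ht(\mu-\lambda)$ contributions from Murnaghan--Nakayama conspire to leave exactly $\delta_{\rho,\lambda[n]}$ is the bulk of the work; I would do this by expanding both sides in the power-sum basis and matching term-by-term, using that evaluation at eigenvalues sends $p_k$ to a function of cycle type that is insensitive to sufficiently long first rows.
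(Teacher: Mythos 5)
The paper does not prove this proposition from first principles: Parts 1 and 3 are quoted from Theorem 14 of Orellana--Zabrocki and Theorem 1.2 of Assaf--Speyer (with the Pieri factor $H$ replacing a strip sum), and Part 2 is quoted from Theorem 1 of Orellana--Zabrocki. Your attempt to instead take the formula in Part 3 as the \emph{definition} and derive the other properties is a legitimately different and more self-contained route, and the easy portions of it are correct: the finiteness argument ($L$ has no constant term, so $s_{\mu'}[L]H$ lives in degrees $\geq |\mu|$, forcing $|\mu|\leq|\lambda|$), the extraction of the leading term $s_\lambda$ from $s_{\mu'}[L_1]H_0=s_{\mu'}$, the unitriangularity giving the basis property (modulo integrality, which you flag), and the reduction of Part 2 via Equation \eqref{restriction_multiplicity_equation} to the orthogonality identity $\sum_\mu(-1)^{|\lambda|-|\mu|}\langle s_{\lambda'},s_{\mu'}[L]H\rangle\langle s_\mu,s_\rho[H]\rangle=\delta_{\rho,\lambda[n]}$ are all sound. (Note this identity is just $\langle\tilde{s}_\lambda,s_\rho[H]\rangle=\delta_{\rho,\lambda[n]}$, which is a cleaner target than the double sum.)

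The genuine gap is that this orthogonality identity \emph{is} the theorem of Assaf--Speyer; it is the entire content of Part 2 combined with Part 3, and your proposal does not prove it. The passage from ``apply adjointness, collapse the $\mu$-sum, use Lemma \ref{antipode_adjoint_lemma}'' to ``the signs and $ht$ contributions conspire to leave $\delta_{\rho,\lambda[n]}$'' is a plan, not an argument, and two of its ingredients are misapplied. Lemma \ref{antipode_adjoint_lemma} only commutes the antipode past the operators $p_\mu^\perp$; it says nothing about how $S$ (equivalently $\omega$ up to sign) interacts with plethysm by $L$, which is the actual obstruction to ``absorbing the transposes,'' and that interaction carries its own parity-dependent sign rules. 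Likewise Observation \ref{rim_hook_observation} describes rim hooks of the two-row-stabilized partition $\lambda[n]$, whereas the kernel $s_{\mu'}[L]H$ involves plethysm by the Lyndon function, not rim-hook removal, so the claimed ``matching dichotomy'' is not established. A workable self-contained proof would need the plethystic identity $H[L]=\sum_{k\geq 0}p_1^k$ (PBW for the free Lie algebra) to cancel the $[L]$ against the $[H^\perp]$ coming from $s_\rho[H]$, together with a genuine use of $n\geq 2|\lambda|$ (or $n\geq|\lambda|+\lambda_1$) to kill the lower-order terms; none of this is in your sketch. As written, the proposal establishes Parts 1 and 3 (the latter by fiat, as a definition) but leaves Part 2 unproven.
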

\begin{proof}
Part 1 is follows from by taking the leading order term in Theorem 14 of \cite{OrellanaZabrocki} or Theorem 1.2 of \cite{AssafSpeyer}. Part 2 is (part of) Theorem 1 of \cite{OrellanaZabrocki}, although it is convenient for us to use the weaker bound of $n \geq 2|\lambda|$ rather than the given $n \geq |\lambda| + \lambda_1$. Part 3 is Theorem 1.2 of \cite{AssafSpeyer}, but rewritten using the Pieri rule: the factor of $H$ replaces a sum over vertical/horizontal strips.
\end{proof}
\noindent
By the first part of the proposition, if we wish to express a symmetric function $f$ in the basis $\tilde{s}_\lambda$, we only need to use partitions satisfying $|\lambda| \leq \deg(f)$.

\begin{remark}
In the case where $n$ is too small for $\lambda[n]$ to be a partition, the behaviour of evaluations of $\tilde{s}_\lambda$ was established in \cite{Ryba2}. The result is either zero or (up to a sign) the character of an irreducible representation of $S_n$ determined by turning $(n-|\lambda|, \lambda)$ into a partition using a certain algorithm.
\end{remark}
\noindent
The formula for $\tilde{s}_\lambda$ may be expressed in terms of differential operators applied to the Schur function $s_\lambda$.
\begin{proposition} \label{diff_op_proposition}
There exist differential operators $D_i$, homogeneous of degree $-i$, such that
\[
\tilde{s}_\lambda = \sum_{i \geq 0} D_i(s_\lambda).
\]
Moreover each $D_i$ may be written as a linear combination of $p_\alpha \frac{\partial}{\partial p_\beta}$ where $|\beta| \leq 2i$ (and by degree considerations $|\alpha|-|\beta| = -i$).
\end{proposition}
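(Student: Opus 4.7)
The plan is to massage the formula $\tilde{s}_\lambda = \sum_{\mu} (-1)^{|\lambda|-|\mu|} \langle s_{\lambda'}, s_{\mu'}[L]H \rangle s_\mu$ from the previous proposition into an expression in which $s_\lambda$ (not $s_{\lambda'}$) appears as the argument of an explicit differential operator. First, I would expand $\langle s_{\lambda'}, s_{\mu'}[L] H\rangle$ using the Hopf pairing identity $\langle f, gh\rangle = \langle \Delta f, g \otimes h\rangle$ together with Lemma \ref{standard_coproduct_formula_lemma}, the adjoint property of plethysm $\langle f[L], g\rangle = \langle f, g[L^\perp]\rangle$, and the easy identity $\langle p_\nu/z_\nu, H\rangle = 1/z_\nu$. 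This replaces the inner product with $\sum_\nu \tfrac{1}{z_\nu}\langle s_{\mu'}, (p_\nu^\perp s_{\lambda'})[L^\perp]\rangle$. The sum over $\mu$ then collapses via $s_{\mu'} = (-1)^{|\mu|} S(s_\mu)$ and the orthonormality of the Schur basis, producing a single application of $S$; finally Lemma \ref{antipode_adjoint_lemma} replaces $s_{\lambda'}$ by $s_\lambda$. The resulting identity is
\[
\tilde{s}_\lambda = \sum_\mu \frac{(-1)^{l(\mu)}}{z_\mu} \, S\bigl(S(p_\mu^\perp s_\lambda)[L^\perp]\bigr).
\]

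Next, I define $\mathcal{T}(f) := S(S(f)[L^\perp])$ and decompose it. Since $\Lambda$ is commutative and cocommutative, $\Delta \circ S = (S \otimes S) \circ \Delta$ and $S$ is an algebra automorphism with $S^2 = \Id$. Writing $L = p_1 + L_{\geq 2}$ and applying Observation \ref{lyndon_specific_observation} gives $S(f)[L^\perp] = \sum S(f^{(0)}) \cdot S(f^{(1)})[L_{\geq 2}^\perp]$, so a second application of $S$ yields
\[
\mathcal{T}(f) = \sum f^{(0)} \, \mathcal{E}(f^{(1)}), \qquad \mathcal{E}(g) := S\bigl(S(g)[L_{\geq 2}^\perp]\bigr).
\]
The crucial point is that Observation \ref{skew_plethysm_degree_observation} forces $\mathcal{E}(g) \in \Lambda$ to have degree at most $\deg(g)/2$. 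Expanding $\Delta f$ by Lemma \ref{standard_coproduct_formula_lemma} and setting $A_\nu := \mathcal{E}(p_\nu)$, one obtains $\mathcal{T}(f) = \sum_\nu (A_\nu / z_\nu)\, p_\nu^\perp(f)$. Substituting $f = p_\mu^\perp s_\lambda$ and using $p_\nu^\perp p_\mu^\perp = p_{\mu \cup \nu}^\perp$ collapses the double sum into
\[
\tilde{s}_\lambda = \sum_{\mu, \nu} \frac{(-1)^{l(\mu)}\, A_\nu}{z_\mu\, z_\nu}\, p_{\mu \cup \nu}^\perp(s_\lambda).
\]

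To finish, I expand each $A_\nu$ in the power-sum basis, $A_\nu = \sum_\rho a_\rho^{(\nu)} p_\rho$ with $|\rho| \leq |\nu|/2$. Each summand above then becomes a rational multiple of $p_\rho \frac{\partial}{\partial p_\beta}$ with $\beta = \mu \cup \nu$, so $|\alpha| = |\rho| \leq |\nu|/2$ and $|\beta| = |\mu| + |\nu|$. Grouping by net degree $-i = |\alpha| - |\beta|$ forces $|\mu| + |\nu| - |\alpha| = i$, which combined with $|\alpha| \leq |\nu|/2$ gives $|\mu| + |\nu|/2 \leq i$ and hence $|\beta| = |\mu| + |\nu| \leq 2i$. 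Only finitely many pairs $(\mu, \nu)$ satisfy this bound, so each $D_i$ is a genuine finite $\mathbb{Q}$-linear combination of operators $p_\alpha \tfrac{\partial}{\partial p_\beta}$ with $|\beta| \leq 2i$, as required. The main obstacle is the Hopf-algebraic juggling in the first step: once $s_\lambda$ itself has been exposed as the argument of a single operator, Observations \ref{lyndon_specific_observation} and \ref{skew_plethysm_degree_observation} deliver both the differential-operator structure and the bound $|\beta|\leq 2i$ essentially for free.
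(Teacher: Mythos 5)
Your argument is correct and follows essentially the same route as the paper's proof: conjugate by the antipode $S$, split $L = p_1 + L_{\geq 2}$, and combine Observation \ref{lyndon_specific_observation} with Lemmas \ref{standard_coproduct_formula_lemma} and \ref{antipode_adjoint_lemma} to exhibit each term as multiplication by a symmetric function of degree at most half the size of the removed part, composed with some $p_\beta^\perp$. The only cosmetic difference is that you pair the factor $H$ off separately against $p_\nu/z_\nu$ (so that $\beta = \mu \cup \nu$ and the low-degree factor $A_\nu$ carries only the Lyndon contribution), whereas the paper absorbs $H^\perp$ into the inhomogeneous factor $j_\nu$ and keeps $\beta = \nu$; both bookkeepings yield the same bound $|\beta| \leq 2i$.
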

\begin{proof}
We apply the isometry $S$ to $\tilde{s}_\lambda$ to get
\begin{eqnarray*}
S(\tilde{s}_\lambda) &=& \sum_\mu (-1)^{|\lambda|} \langle s_{\lambda^\prime}, s_{\mu^\prime}[L]H \rangle s_{\mu^\prime} \\
&=& \sum_\mu (-1)^{|\lambda|} \langle (H^\perp s_{\lambda^\prime})[L^\perp], s_{\mu^\prime} \rangle s_{\mu^\prime} \\
&=& (-1)^{|\lambda|} (H^{\perp} s_{\lambda^{\prime}})[L^\perp].
\end{eqnarray*}
To proceed, we note that
\begin{eqnarray*}
\langle \Delta( H^\perp f), g \otimes h \rangle &=& \langle H^\perp f, gh \rangle \\
&=& \langle f, Hgh \rangle \\
&=& \langle \Delta(f), g \otimes Hh \rangle \\
&=& \langle (\Id \otimes H^\perp) \Delta(f), g \otimes h \rangle,
\end{eqnarray*}
so $\Delta(H^\perp f) = (\Id \otimes H^\perp) \Delta(f)$. Taking $f = s_{\lambda^\prime}$ and applying Lemma \ref{standard_coproduct_formula_lemma}, we have
\[
\Delta(H^\perp s_{\lambda^\prime}) = (\Id \otimes H^\perp) \Delta(s_{\lambda^\prime}) = \sum_{\mu} p_\mu^\perp s_{\lambda^\prime} \otimes H^\perp \frac{p_\mu}{z_\mu}.
\]
Next we apply Observation \ref{lyndon_specific_observation}:
\[
(H^{\perp} s_{\lambda^{\prime}})[L^\perp] = \sum_{\nu} (p_\nu^\perp s_{\lambda^\prime}) \cdot ((H^\perp \frac{p_\nu}{z_\nu})[L_{\geq 2}^\perp]).
\]
We let $j_\nu = ((H^\perp \frac{p_\nu}{z_\nu})[L_{\geq 2}^\perp])$ and note that the degree of $j_\nu$ is at most $|\nu|/2$ because $H^\perp p_\nu$ has degree $|\nu|$ (although it is inhomogeneous). The degree of $p_\nu^\perp s_{\lambda^\prime}$ is $|\lambda|-|\nu|$. To recover $\tilde{s}_\lambda$ we apply $S$ again and use Lemma \ref{antipode_adjoint_lemma}:
\begin{eqnarray*}
\tilde{s}_\lambda &=& (-1)^{|\lambda|}\sum_{\nu} S(j_\nu) S(p_\nu^\perp s_{\lambda^\prime}) \\
&=& \sum_{\nu} S(j_\nu) (-1)^{l(\nu)} p_\nu^\perp s_{\lambda}.
\end{eqnarray*}
We have now expressed $\tilde{s}_\lambda$ as a sum of differential operators applied to $s_\lambda$. The degree of (the homogeneous components of) the operator $S(j_\nu) p_\nu^\perp$ is at most $\frac{|\nu|}{2} - |\nu| = \frac{-|\nu|}{2}$, so only $\nu$ with $2i \geq |\nu|$ contribute to $D_i$.
\end{proof}

\section{Kronecker Comultiplication of Stable Characters}
\noindent
Recall that $[k]$ denotes $\{1,2,\ldots k\}$. If $m,n \in \mathbb{Z}_{\geq 0}$, the groups $S_m$ and $S_n$ act on the two factors of $[m] \times [n]$. Since this set has $mn$ elements, we obtain a homomorphism $\varphi: S_{m} \times S_{n} \to S_{mn}$. Linearising this action of $S_m \times S_n$ on $[m]\times[n]$ gives a representation of $S_m \times S_n$ on $\mathbb{C}^m \otimes \mathbb{C}^n = \mathbb{C}^{mn}$ which we now describe. For $(i,j) \in [m] \times [n]$, $\mathbb{C}^m \otimes \mathbb{C}^n$ has basis $e_i \otimes e_j$. Then for $(g_1, g_2) \in S_m \times S_n$,
\[
(g_1, g_2) \cdot (e_i \otimes e_j) = e_{g_1(i)} \otimes e_{g_2(j)}.
\]
Thus the permutation matrix associated to $(g_1, g_2)$ (viewed as an element of $S_{mn}$) is the tensor product of the permutation matrices associated to $g_1$ and $g_2$. This allows us to describe the eigenvalues of $\varphi(g_1, g_2)$.
\begin{observation} \label{tensor_product_eigenvalue_observation}
Suppose that the eigenvalues of $g_1 \in S_m$ are $\alpha_i$ ($i \in [m]$) and the eigenvalues of $g_2 \in S_n$ are $\beta_j$ ($j \in [n]$). Then the eigenvalues of $\varphi(g_1, g_2)$ are the products $\alpha_i \beta_j$ ($(i,j) \in [m] \times [n]$).
\end{observation}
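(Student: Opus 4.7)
The plan is to reduce this to the standard linear algebra fact that the spectrum of a Kronecker product of matrices is the product of the spectra, taking advantage of the fact that permutation matrices (being of finite order) are diagonalisable. The statement immediately preceding the observation already identifies $\varphi(g_1, g_2)$, as a permutation matrix acting on $\mathbb{C}^m \otimes \mathbb{C}^n$, with the Kronecker product $M_1 \otimes M_2$ of the permutation matrices $M_1, M_2$ associated with $g_1$ and $g_2$. So the content to verify is purely about tensor products of diagonalisable matrices.

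The concrete steps I would take are as follows. First, observe that $g_1 \in S_m$ has finite order, so $M_1$ satisfies $M_1^{|g_1|} = \Id$; the minimal polynomial therefore divides $t^{|g_1|}-1$, which has distinct roots, so $M_1$ is diagonalisable, and similarly for $M_2$. Pick bases $\{v_i\}_{i\in[m]}$ and $\{w_j\}_{j\in[n]}$ with $M_1 v_i = \alpha_i v_i$ and $M_2 w_j = \beta_j w_j$. Then the $mn$ vectors $v_i \otimes w_j$ form a basis of $\mathbb{C}^m \otimes \mathbb{C}^n$, and a direct calculation using $(M_1 \otimes M_2)(v_i \otimes w_j) = (M_1 v_i) \otimes (M_2 w_j)$ shows that $v_i \otimes w_j$ is an eigenvector with eigenvalue $\alpha_i \beta_j$. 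Since this accounts for all $mn$ eigenvalues of $\varphi(g_1, g_2)$ counted with multiplicity, the observation follows.

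There is really no obstacle here: the result is essentially a one-line consequence of the multiplicativity of eigenvalues under the Kronecker product, and the only subtlety worth mentioning is the justification that eigenvectors of $M_1$ and $M_2$ individually span the respective spaces, which is ensured by the finite-order (hence diagonalisable) property of permutation matrices.
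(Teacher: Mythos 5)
Your proof is correct and matches the paper's intent: the paper offers no separate argument for this observation, treating it as an immediate consequence of the identification of $\varphi(g_1,g_2)$ with the Kronecker product of the two permutation matrices, which is exactly the fact you verify. Your added justification that permutation matrices are diagonalisable (so that the eigenvectors $v_i \otimes w_j$ genuinely account for all $mn$ eigenvalues with multiplicity) is a reasonable way to make the standard Kronecker-product spectrum fact rigorous.
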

\noindent
This in turn allows us to give an interpretation of the Kronecker comultiplication of stable characters.
\begin{theorem} \label{restriction_coefficient_theorem}
Let the integers $R_{\mu, \nu}^\lambda$ be defined by
\[
\Delta^*(\tilde{s}_\lambda) = \sum_{\mu, \nu} R_{\mu, \nu}^\lambda \tilde{s}_\mu \otimes \tilde{s}_\mu.
\]
Then $R_{\mu, \nu}^\lambda$ is the multiplicity of $S^{\mu[m]} \otimes S^{\nu[n]}$ in
\[
\mathrm{Res}_{S_m \times S_n}^{S_{mn}}(S^{\lambda[mn]}),
\]
provided that $m$ and $n$ are each sufficiently large relative to $\lambda$ ($m,n \geq 2|\lambda|$ suffices).
\end{theorem}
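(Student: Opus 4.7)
The plan is to compute the character of the restriction directly via the stable character property (part 2 of the Proposition on $\tilde{s}_\lambda$), and then match against the definition of $R_{\mu,\nu}^\lambda$.

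First I would fix $(g_1,g_2) \in S_m \times S_n$, let $\alpha_1,\ldots,\alpha_m$ and $\beta_1,\ldots,\beta_n$ be the eigenvalues of $g_1$ and $g_2$, and evaluate the character of $S^{\lambda[mn]}$ at $\varphi(g_1,g_2)$. Since $m,n \geq 2|\lambda|$ gives $mn \geq 2|\lambda|$, part 2 of the Proposition yields
\[
\chi^{\lambda[mn]}(\varphi(g_1,g_2)) = \tilde{s}_\lambda(\text{eigenvalues of }\varphi(g_1,g_2)) = \tilde{s}_\lambda(\{\alpha_i\beta_j\}),
\]
where the last equality uses Observation \ref{tensor_product_eigenvalue_observation}. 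Since $\Delta^*$ is exactly the operation of evaluating at the product variable set $\{x_iy_j\}$, evaluating $\Delta^*(\tilde{s}_\lambda)$ in the variables $\alpha_i$ (first tensor factor) and $\beta_j$ (second tensor factor) gives the same number.

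Next I would bound the partitions appearing in the sum defining $R_{\mu,\nu}^\lambda$. Because $\tilde{s}_\lambda$ is inhomogeneous of degree $|\lambda|$, and $\Delta^*$ (viewed as substitution $x_iy_j$) preserves degree in each of the two variable sets, $\Delta^*(\tilde{s}_\lambda)$ has degree at most $|\lambda|$ in each tensor factor. Hence only pairs $(\mu,\nu)$ with $|\mu|,|\nu|\leq |\lambda|$ can appear with nonzero $R_{\mu,\nu}^\lambda$. For such $\mu$ we have $m \geq 2|\lambda| \geq 2|\mu|$, so part 2 of the Proposition applies again:
\[
\tilde{s}_\mu(\alpha_1,\ldots,\alpha_m) = \chi^{\mu[m]}(g_1),
\]
and similarly $\tilde{s}_\nu(\beta_1,\ldots,\beta_n) = \chi^{\nu[n]}(g_2)$. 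Moreover $m-|\mu| \geq 2|\lambda|-|\lambda| = |\lambda| \geq \mu_1$, so $\mu[m]$ is genuinely a partition (and likewise for $\nu[n]$), making $S^{\mu[m]}\otimes S^{\nu[n]}$ a bona fide irreducible of $S_m\times S_n$.

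Combining the two steps, for all $(g_1,g_2)$ we get the character identity
\[
\chi^{\lambda[mn]}(\varphi(g_1,g_2)) = \sum_{\mu,\nu} R_{\mu,\nu}^\lambda\, \chi^{\mu[m]}(g_1)\, \chi^{\nu[n]}(g_2),
\]
the left side being the character of $\mathrm{Res}^{S_{mn}}_{S_m\times S_n}(S^{\lambda[mn]})$ and the right side the character of the virtual module $\bigoplus_{\mu,\nu} R_{\mu,\nu}^\lambda\, S^{\mu[m]}\otimes S^{\nu[n]}$. Linear independence of the irreducible characters of $S_m\times S_n$ then identifies $R_{\mu,\nu}^\lambda$ with the desired multiplicity.

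The only real obstacle is bookkeeping: confirming that $\Delta^*(\tilde{s}_\lambda)$ lives in $\Lambda_{\leq|\lambda|}\otimes\Lambda_{\leq|\lambda|}$ so that the stability bound $n\geq 2|\mu|$ (and analogously $n\geq 2|\nu|$) is automatic, and confirming that $\mu[m]$, $\nu[n]$ are valid partitions, both of which follow cleanly from $m,n\geq 2|\lambda|$.
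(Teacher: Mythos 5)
Your proposal is correct and follows essentially the same route as the paper's proof: evaluate $\tilde{s}_\lambda$ at the eigenvalues of $\varphi(g_1,g_2)$, use the fact that these are the products $\alpha_i\beta_j$ together with the interpretation of $\Delta^*$ as substitution into the product variable set, bound $|\mu|,|\nu|\leq|\lambda|$ by degree considerations so that the stable character property applies to each factor, and conclude by linear independence of irreducible characters of $S_m\times S_n$. The extra bookkeeping you include (checking $\mu[m]$ and $\nu[n]$ are genuine partitions) is a slightly more explicit version of the same observation made in the paper.
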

\begin{proof}
To compute the character of $S^{\lambda[mn]}$ on the element $g$, we evaluate $\tilde{s}_\lambda$ at the eigenvalues of $g$. Suppose now that $g_1 \in S_m$ has eigenvalues $\alpha_i$, $g_2 \in S_m$ has eigenvalues $\beta_j$, and $g = \varphi(g_1,g_2)$. By Observation \ref{tensor_product_eigenvalue_observation},
\begin{eqnarray*}
\chi^{\lambda[mn]}(g) &=& \tilde{s}_\lambda(\alpha_i\beta_j) \\
&=& \sum_{\mu, \nu} R_{\mu, \nu}^\lambda \tilde{s}_{\mu}(\alpha_i) \otimes \tilde{s}_\nu(\beta_j) \\
&=& \sum_{\mu, \nu} R_{\mu,\nu}^\lambda \chi^{\mu[m]}(g_1) \cdot \chi^{\nu[n]}(g_2).
\end{eqnarray*}
Since $\Delta^*$ takes degree $d$ symmetric functions to (sums of) tensor products of two degree $d$ symmetric functions, only $\mu, \nu$ with $|\mu|,|\nu| \leq |\lambda|$ arise, so $m,n \geq 2|\lambda|$ guarantees that $\mu[m]$ and $\nu[n]$ are always valid partitions. Viewing this equation as an equality of characters of $S_m \times S_n$, we obtain the theorem.
\end{proof}

\begin{remark} \label{deligne_category_remark}
Theorem \ref{restriction_coefficient_theorem} in particular shows that the restriction multiplicities from $S_{mn}$ to $S_m \times S_n$ obey a certain stability property (they are constant for $m$ and $n$ sufficiently large). There is another way to see this via the Deligne category $\underline{Rep}(S_t)$ (we direct the reader to \cite{ComesOstrik} for an introduction). The category is constructed from an object $V$ which ``interpolates'' the permutation representations $\mathbb{C}^n$ of the symmetric groups $S_n$; when $t$ is a non-negative integer, there are ``specialisation'' functors from $\underline{Rep}(S_t)$ to the usual representation category of the symmetric group $S_t$. It turns out that $V$ has the structure of a commutative Frobenius algebra of categorical dimension $t$. By Proposition 8.3 of \cite{Deligne}, if $\mathcal{C}$ is $\mathbb{C}$-linear symmetric tensor category, symmetric monoidal functors $F:\underline{Rep}(S_t) \to \mathcal{C}$ are equivalent to commutative Frobenius algebras in $\mathcal{C}$ of categorical dimension $t$, where a functor $F$ corresponds to the object $F(V)$. One example of such a functor is
\[
F: \underline{Rep}(S_{t_1t_2}) \to \underline{Rep}(S_{t_1}) \boxtimes \underline{Rep}(S_{t_2}),
\]
where $F(V) = V \otimes V$ is the product of the generating objects of the two copies of the Deligne category. This is the ``interpolated'' version of $\Res{S_m \times S_n}{S_{mn}}(\mathbb{C}^{mn}) = \mathbb{C}^m \otimes \mathbb{C}^n$. Stability properties follow by taking $t_1, t_2$ to be sufficiently large integers and applying specialisation functors to obtain representations of $S_{t_1} \times S_{t_2}$. The argument is similar to stable Kronecker coefficients in Subsection 5.4 of \cite{Entova-Aizenbud}. The author is grateful to Pavel Etingof for this remark.
\end{remark}

\section{Two-Row Stability}
\noindent
Theorem \ref{restriction_coefficient_theorem} shows that the limit of the restriction multiplicities
\[
\lim_{m,n \to \infty}
\dim\hom_{S_m \times S_n}(S^{\mu[m]} \boxtimes S^{\nu[n]}, \Res{S_m\times S_n}{S_{mn}}(S^{\lambda[mn]}))
\]
exists and is finite. The aim of the rest of the paper is to extend this to a similar stability result where two rows of all the partitions are variable. We achieve this via a computation of $\Delta^*(\tilde{s}_{\lambda[n]})$ and determining the behaviour for $n$ large. Our approach is to understand the transition matrices between the $\tilde{s}_\lambda$ and Schur function bases.

\begin{proposition} \label{first_stable_decomposition_proposition}
Let $i$ be an integer. Suppose that for partitions $\lambda, \mu$, the sequence $M_{\lambda, \mu}^{(i)}(n)$ is defined via
\[
\tilde{s}_{\lambda[n]} = \sum_{i,\mu} M_{\lambda, \mu}^{(i)}(n) s_{\mu[n-i]},
\]
where we take $M_{\lambda, \mu}^{(i)}(n) = 0$ if either $\mu[n-i]$ or $\lambda[n]$ is not defined. Then $M_{\lambda, \mu}^{(i)}(n)$ is constant for $n \geq 2 |\lambda| + 3i$. Let $
M_{\lambda, \mu}^{(i)}$ be the limiting value. If $i < 0$, then $M_{\lambda, \mu}^{(i)} = 0$. If $i=0$, then $M_{\lambda, \mu}^{(0)} = \delta_{\lambda, \mu}$. If $i \geq 0$, then $M_{\lambda, \mu}^{(i)}(n) = 0$ unless $||\mu|-|\lambda|| \leq 2i$.
\end{proposition}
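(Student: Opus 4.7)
The plan is to apply Proposition \ref{diff_op_proposition}, which writes
\[
\tilde{s}_{\lambda[n]} = \sum_{i \geq 0} D_i(s_{\lambda[n]}),
\]
where each $D_i$ is a linear combination of operators $p_\alpha \frac{\partial}{\partial p_\beta}$ with $|\alpha| - |\beta| = -i$ and $|\beta| \leq 2i$. The task then reduces to analysing $p_\alpha p_\beta^\perp s_{\lambda[n]}$ for each such $(\alpha, \beta)$ and showing it decomposes as $\sum_\mu c_\mu s_{\mu[n-i]}$ with constants $c_\mu$ independent of $n$ once $n$ is large enough.

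First I would compute $p_\beta^\perp s_{\lambda[n]}$ by iterating Observation \ref{rim_hook_observation}. Writing $\beta = (r_1, \ldots, r_k)$, at each step one removes a rim hook of size $r_j$ from an intermediate partition $\nu[n']$ with $|\nu| \leq |\lambda|$. The observation applies whenever $n' \geq 2|\nu| + r_j$, and this is guaranteed by $n \geq 2|\lambda| + |\beta|$; each removal either shortens the top row by $r_j$ (leaving $\nu$ alone) or removes a rim hook from $\nu$ itself (leaving the top row alone). The resulting signed multiplicities depend only on $\lambda$ and $\beta$, not on $n$.

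Next I would handle multiplication by $p_\alpha$ in the same way, adding rim hooks of sizes $\alpha_1, \ldots, \alpha_\ell$ to the intermediate results. The analogous stability condition requires the current top row to exceed the current body by at least the next rim hook size; a short accounting, using that the body can grow by at most $|\alpha|$ during the additions, shows that $n \geq 2|\lambda| + |\alpha| + |\beta|$ suffices. Since $|\alpha| \leq i$ and $|\beta| \leq 2i$, the worst case is $n \geq 2|\lambda| + 3i$, matching the bound in the proposition. Summing over the $(\alpha, \beta)$ appearing in $D_i$ yields the stability of $M^{(i)}_{\lambda, \mu}(n)$.

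The remaining claims are then quick. Since the sum in Proposition \ref{diff_op_proposition} ranges only over $i \geq 0$, we have $M^{(i)}_{\lambda, \mu} = 0$ for $i < 0$. For $i = 0$ the constraints $|\beta| \leq 0$ and $|\alpha| = |\beta|$ force $\alpha = \beta = \emptyset$, making $D_0$ a scalar; the leading-term identity $\tilde{s}_\lambda = s_\lambda + (\text{lower degree})$ pins $D_0 = \mathrm{Id}$ and hence $M^{(0)}_{\lambda, \mu} = \delta_{\lambda, \mu}$. Finally, tracking the body throughout the rim-hook process gives $-|\beta| \leq |\mu| - |\lambda| \leq |\alpha|$, and since $|\alpha| \leq |\beta| \leq 2i$ we conclude $||\mu| - |\lambda|| \leq 2i$. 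The main obstacle I expect is the bookkeeping needed to convert the qualitative decoupling of rim-hook operations (into a top-row component and a body component) into the precise bound $n \geq 2|\lambda| + 3i$, as a naive count easily produces something weaker.
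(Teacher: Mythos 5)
Your proposal is correct and follows essentially the same route as the paper: decompose $\tilde{s}_{\lambda[n]}$ via Proposition \ref{diff_op_proposition}, identify $D_i s_{\lambda[n]}$ with the degree-$(n-i)$ part, and use the rim-hook decoupling of Observation \ref{rim_hook_observation} (first for the removals from $p_\beta^\perp$, then for the additions from $p_\alpha$) to get stability once $n \geq 2|\lambda| + |\alpha| + |\beta|$, which with $|\beta| \leq 2i$ and $|\alpha| = |\beta| - i \leq i$ gives the bound $2|\lambda| + 3i$. Your arguments for the $i<0$, $i=0$, and $||\mu|-|\lambda|| \leq 2i$ claims also match the paper's.
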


\begin{proof}
Since $\tilde{s}_{\lambda[n]}$ is of degree $n$, it cannot contain any Schur functions of degree larger than $n$, so the case $n-i > n$ is ruled out. We use the characterisation of $\tilde{s}_\lambda$ from Proposition \ref{diff_op_proposition}. We write
\[
\tilde{s}_{\lambda[n]} = \sum_{i \geq 0} D_i s_{\lambda[n]},
\]
and express each $D_i$ as a linear combination of $p_\alpha \frac{\partial}{\partial p_\beta}$ (where $|\beta| - |\alpha| = i$ and $|\beta| \leq 2i$). By comparing degrees, we see that
\[
\sum_\mu M_{\lambda, \mu}^{(i)} s_{\mu[n-i]} = D_i s_{\lambda[n]}.
\]
When $i=0$, $D_i$ is the identity, and $D_0 s_{\lambda[n]} = s_{\lambda[n]}$, proving the statement about $M_{\lambda, \mu}^{(0)}$. Finally, applying $p_\alpha \frac{\partial}{\partial p_\beta}$ amounts to some sequence of removing and then adding rim hooks to $\lambda[n]$. By Observation \ref{rim_hook_observation}, if $n \geq 2|\lambda| + r$, then any rim hook of $\lambda[n]$ of size at most $r$ is either contained in the top row, or contained in $\lambda$. Applying $\frac{\partial}{\partial p_\beta}$ amounts to successively removing several rim hooks of total size $|\beta|$. As soon as $n \geq 2|\lambda| + |\beta|$, Observation \ref{rim_hook_observation} will apply at each step of rim hook removal, and each rim hook will be removed either from the top row or from $\lambda$. Then applying $p_\alpha$ amounts to adding some rim hooks. But if the top row is at least $|\alpha|$ longer than the second row, these rim hooks are either added entirely to the top row, or entirely to $\lambda$. So as soon as $n \geq 2|\lambda| + |\beta| + |\alpha|$, the result of the rim hook removal and addition ceases to change as $n$ gets larger. Since $|\beta| \leq 2i$ and $|\alpha| = -i+|\beta| \leq i$, we have $|\beta| + |\alpha| \leq 3i$. So $M_{\lambda, \mu}^{(i)}(n)$ is constant for $n \geq 2|\lambda| + 3i$. Finally, at most $2i$ boxes are removed from $\lambda$ and then at most $i$ boxes are added. In any case, the resulting partition $\mu$ has size within $2i$ of the size $|\lambda|$.
\end{proof}
\noindent
As mentioned in the introduction, the change-of-basis matrix between the Schur functions $s_\mu$ and the $\tilde{s}_\lambda$ encodes the multiplicities of the irreducible representations $S^{\lambda[n]}$ in $\mathbb{S}^{\mu}(\mathbb{C}^n)$ provided $n$ is large enough. The following proposition implies stability for the multiplicity of $S^{\lambda[n-i][m]}$ in $\mathbb{S}^{\mu[n]}(\mathbb{C}^m)$, where we first take $m \to \infty$ and then $n \to \infty$ (here $\lambda[n-i][m] = (n-i-|\lambda|, \lambda)[m] = (m-n+i, n-i-|\lambda|, \lambda)$).

\begin{proposition}
For partitions $\lambda, \mu$, define the sequence $N_{\mu, \lambda}^{(i)}(n)$ via
\[
s_{\mu[n]} = \sum_{i,\lambda} N_{\mu, \lambda}^{(i)}(n) \tilde{s}_{\lambda[n-i]},
\]
where we take $N_{\mu, \lambda}^{(i)}(n) = 0$ if either $\mu[n]$ or $\lambda[n-i]$ is undefined. Then $N_{\mu, \lambda}^{(i)}(n)$ is constant for $n \geq 2 |\lambda| + 7i$. Let $N_{\mu,\lambda}^{(i)}$ be the limiting value. If $i < 0$, $N_{\mu, \lambda}^{(i)} = 0$. Also, $N_{\mu, \lambda}^{(0)} = \delta_{\mu, \lambda}$, and $N_{\mu, \lambda}^{(i)} = 0$ unless $||\mu| - |\lambda|| \leq 2i$.
\end{proposition}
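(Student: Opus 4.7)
The plan is to invert the previous proposition. For each $n$ large enough that that proposition applies, rearranging gives
\[
s_{\nu[n]} = \tilde{s}_{\nu[n]} - \sum_{j \geq 1,\, \rho} M_{\nu, \rho}^{(j)}(n)\, s_{\rho[n-j]}.
\]
I would iterate this identity starting from $s_{\mu[n]}$: at each stage, every $s_{\nu[n-k]}$ appearing on the right-hand side is replaced by $\tilde{s}_{\nu[n-k]}$ minus a sum of Schur functions at strictly smaller levels. Since each substitution drops the level by at least $1$ and we only care about the coefficient of $\tilde{s}_{\lambda[n-i]}$, after at most $i$ substitutions the surviving $s$'s can no longer contribute, leaving a finite expansion
\[
N_{\mu,\lambda}^{(i)}(n) = \delta_{\mu,\lambda}\delta_{i,0} + \sum_{k \geq 1} (-1)^{k} \sum_{\mathrm{paths}} \prod_{j=1}^{k} M_{\nu_{j-1}, \nu_{j}}^{(i_{j})}(n - i_{1} - \cdots - i_{j-1}),
\]
where a path consists of $(i_1,\ldots,i_k)$ and $(\nu_0, \nu_1, \ldots, \nu_k)$ satisfying $\nu_{0} = \mu$, $\nu_{k} = \lambda$, each $i_{j} \geq 1$, and $i_{1} + \cdots + i_{k} = i$. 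The sum has only finitely many terms, since $k \leq i$ and the size constraint $||\nu_{j}| - |\nu_{j-1}|| \leq 2 i_{j}$ from the previous proposition bounds the intermediate $\nu_j$'s.

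Next I would invoke stability of each factor. Writing $s_{j-1} = i_{1} + \cdots + i_{j-1}$, the factor $M_{\nu_{j-1}, \nu_{j}}^{(i_{j})}(n - s_{j-1})$ is constant once $n - s_{j-1} \geq 2|\nu_{j-1}| + 3 i_{j}$. Bounding $|\nu_{j-1}| \leq |\lambda| + 2(i - s_{j-1})$ by walking back from $\nu_k = \lambda$ via the step estimate, the requirement becomes $n \geq 2|\lambda| + 4i - 3 s_{j-1} + 3 i_{j}$. The maximum over $j$ is attained at $j = 1$ with $i_1 = i$ (the one-step path $k = 1$, $\mu \to \lambda$), yielding $n \geq 2|\lambda| + 7i$; for $j \geq 2$ one has $s_{j-1} \geq 1$ and $i_j \leq i - s_{j-1}$, so the required bound is strictly smaller. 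Hence for $n \geq 2|\lambda| + 7i$, every factor on every path has already reached its stable value, so $N_{\mu,\lambda}^{(i)}(n)$ has done so as well.

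The remaining assertions are read off the expansion: $N^{(i)}_{\mu,\lambda} = 0$ for $i < 0$ since there are no paths; $N^{(0)}_{\mu,\lambda} = \delta_{\mu,\lambda}$ because only the empty path contributes; and $||\mu|-|\lambda|| \leq 2i$ by telescoping the step bounds $||\nu_{j-1}| - |\nu_{j}|| \leq 2 i_{j}$ along any path. The main obstacle is the careful bookkeeping of the degree estimate---specifically, choosing to bound $|\nu_{j-1}|$ by walking back from $\lambda$ rather than forward from $\mu$; the latter would only yield the weaker overall bound $n \geq 2|\lambda| + 9i$. Everything else reduces to a routine invocation of the previous proposition combined with the fact that the iterated substitution terminates after finitely many steps.
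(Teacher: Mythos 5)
Your argument is correct and is essentially the paper's: both invert the transition matrix $\mathbf{M}$ of the previous proposition via a Neumann/geometric series, using its stability threshold $2|\nu_{j-1}|+3i_j$ and its support bound $||\nu_{j-1}|-|\nu_j||\leq 2i_j$ to get constancy for $n\geq 2|\lambda|+7i$ and the condition $||\mu|-|\lambda||\leq 2i$. The only difference is presentational: you unroll the inverse as one explicit path sum with the $n$-shifts tracked throughout, whereas the paper first determines $N^{(k)}_{\mu,\nu}(n)$ recursively from the triangular system $\mathbf{N}\mathbf{M}=\Id$ to get stability, and then passes to the stable limit before expanding $\mathbf{M}^{-1}=\sum_k(-1)^k\mathbf{M}_{>0}^k$ for the support condition; your bookkeeping yields the same $2|\lambda|+7i$ bound.
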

\begin{proof}
First of all, degree considerations imply that when we express $s_{\mu[n]}$ (which has degree $n$), only $\tilde{s}_{\lambda[n-i]}$ (which has degree $n-i$) with $n-i \leq n$ can appear. Thus if $i < 0$, $N_{\mu, \lambda}^{(i)} = 0$. 
\newline \newline \noindent
We use the formula from Proposition \ref{first_stable_decomposition_proposition} for $\tilde{s}_{\lambda[n-i]}$ (we substitute $n \to n-i$) to get
\[
s_{\mu[n]} = \sum_{i,\lambda} N_{\mu, \lambda}^{(i)}(n) \sum_{j,\nu} M_{\lambda, \nu}^{(j)}(n-i) s_{\nu[n-i-j]}.
\]
Equating coefficients of Schur functions we see that for $n$ large enough that $\mu[n]$ is defined,
\[
\sum_{\lambda} \sum_{i+j=k} N_{\mu, \lambda}^{(i)}(n) M_{\lambda, \nu}^{(j)}(n-i) = \delta_{\mu, \nu}\delta_{k,0}.
\]
We may assume that only $\lambda$ with $||\lambda| - |\nu|| \leq 2j$ appear in the sum, otherwise $M_{\lambda, \nu}^{(j)}(n-i) = 0$. In particular, $|\lambda| \leq |\nu| + 2k$, so only finitely many $|\lambda|$ appear. When $n-i \geq 2|\lambda| +3j$, $M_{\lambda, \nu}^{(j)}(n-i)$ takes its limiting value $M_{\lambda, \nu}^{(j)}$ (which is independent of $i$). So since
\[
2|\nu| + 7k \geq 2 |\lambda| + 3k \geq 2 |\lambda| + 3j +i,
\]
taking $n \geq 2|\nu| + 7k$ eliminates the $n$-dependence of all the $M_{\lambda, \nu}^{(j)}(n-i)$ in the expression. Since $M_{\lambda, \nu}^{(0)} = \delta_{\lambda, \nu}$ the resulting equations are
\[
N_{\mu, \nu}^{(k)}(n) + 
\sum_{\lambda} \sum_{i=0}^{k-1} N_{\mu, \lambda}^{(i)}(n) M_{\lambda, \nu}^{(k-i)} = \delta_{\mu, \nu}\delta_{k,0}.
\]
These equations uniquely determine $N_{\mu, \lambda}^{(k)}(n)$ by induction on $k$. Moreover, since the coefficients $M_{\lambda, \nu}^{(k-i)}$ of these linear equations are constant once $n \geq 2|\nu| + 7k$, the same is true of the solutions $N_{\mu, \nu}^{(k)}(n)$.
\newline \newline \noindent
Taking $n$ sufficiently large, we now have the equations
\[
\sum_{\lambda} \sum_{i+j=k} N_{\mu, \lambda}^{(i)} M_{\lambda, \nu}^{(j)} = \delta_{\mu, \nu}\delta_{k,0}.
\]
We rewrite this as a matrix equation, where the matrices have entries that are formal power series in a variable $x$. Let
\[
\mathbf{M}_{\lambda, \nu} = \sum_{j \geq 0} M_{\lambda, \nu}^{(j)} x^j, \hspace{5mm}\mathbf{N}_{\mu, \lambda} = \sum_{i \geq 0} N_{\mu, \lambda}^{(j)} x^i,
\]
so that our relations now become $\mathbf{N}\mathbf{M} = \Id$. Since the degree zero component of $\mathbf{M}$ is the identity matrix, if we write $\mathbf{M} = \Id + \mathbf{M}_{>0}$, then $\mathbf{M}_{>0}$ has positive $x$-adic valuation. Hence, 
\[
\mathbf{N} = \mathbf{M}^{-1} = \sum_{k \geq 0} (-1)^k \mathbf{M}_{>0}^k,
\]
which converges in the $x$-adic topology. An entry $(\mathbf{M}_{>0}^k)_{\alpha_0, \alpha_{k}}$ is a linear combination of terms of the form
\[
M_{\alpha^{(0)}\alpha^{(1)}}^{(j_1)} x^{j_1}
M_{\alpha^{(1)}\alpha^{(2)}}^{(j_2)} x^{j_2}
\cdots
M_{\alpha^{(k-1)}\alpha^{(k)}}^{(j_{k})} x^{j_k},
\]
where each $j_r \geq 1$. This term has degree $i = j_1 + j_2 + \cdots + j_k$. Such a term is zero unless we have
\begin{eqnarray*}
||\alpha_1|-|\alpha_0|| &\leq&  2j_1 \\
||\alpha_2|-|\alpha_1|| &\leq&  2j_2 \\
&\cdots& \\
||\alpha_{k}|-|\alpha_{k-1}|| &\leq&  2j_k \\
\end{eqnarray*}
which implies $||\alpha_{k}| - |\alpha_0|| \leq 2(j_1 + \cdots + j_k) = 2i$. We conclude that $N_{\mu, \lambda}^{(i)}$ must be zero unless $||\lambda| - |\mu|| \leq 2i$.
\end{proof}

\begin{theorem} \label{R_stability_theorem}
Fix integers $a, b$. We have that
\[
\lim_{n \to \infty} R_{\mu[n-a], \nu[n-b]}^{\lambda[n]}
\]
exists and is finite.
\end{theorem}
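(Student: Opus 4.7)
The plan is to compute $\Delta^*(\tilde{s}_{\lambda[n]})$ by detouring through the Schur basis, applying the Kronecker comultiplication there (where it acts by Kronecker coefficients), and then re-expanding in the $\tilde{s}$ basis. First I would use Proposition \ref{first_stable_decomposition_proposition} to write, for $n$ past the relevant threshold,
\[
\tilde{s}_{\lambda[n]} = \sum_{j,\rho} M_{\lambda,\rho}^{(j)}\, s_{\rho[n-j]}.
\]
Since $\Delta^*(s_\kappa) = \sum_{\alpha,\beta} k_{\alpha,\beta}^{\kappa}\, s_\alpha \otimes s_\beta$ and every partition of size $n-j$ has the form $\sigma_0[n-j]$ for a unique $\sigma_0$ (once $n-j$ is large compared to that partition), this gives
\[
\Delta^*(\tilde{s}_{\lambda[n]}) = \sum_{j,\rho,\sigma_0,\tau_0} M_{\lambda,\rho}^{(j)}\, k_{\sigma_0[n-j],\tau_0[n-j]}^{\rho[n-j]}\, s_{\sigma_0[n-j]} \otimes s_{\tau_0[n-j]}.
\]
Murnaghan's theorem on stability of Kronecker coefficients then replaces each $k_{\sigma_0[n-j],\tau_0[n-j]}^{\rho[n-j]}$ by the reduced Kronecker coefficient $\tilde{k}_{\sigma_0,\tau_0}^{\rho}$ once $n$ is sufficiently large.

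Second, I would apply the preceding proposition (the Schur-to-$\tilde{s}$ transition) twice to each factor: $s_{\sigma_0[n-j]} = \sum_{i,\sigma_1} N_{\sigma_0,\sigma_1}^{(i)}\, \tilde{s}_{\sigma_1[n-j-i]}$, and likewise for $\tau_0$. Extracting the coefficient of $\tilde{s}_{\mu[n-a]} \otimes \tilde{s}_{\nu[n-b]}$ forces $i = a-j$ on the left and $i = b-j$ on the right, producing
\[
R_{\mu[n-a],\nu[n-b]}^{\lambda[n]} = \sum_{j,\rho,\sigma_0,\tau_0} M_{\lambda,\rho}^{(j)}\, \tilde{k}_{\sigma_0,\tau_0}^{\rho}\, N_{\sigma_0,\mu}^{(a-j)}\, N_{\tau_0,\nu}^{(b-j)}
\]
for all sufficiently large $n$. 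This is the candidate limit; it is manifestly independent of $n$.

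Third, I would verify the right-hand side is a \emph{finite} sum. The vanishing condition on $N$ forces $j \leq \min(a,b)$ (so the sum is empty, i.e.\ the limit is zero, whenever $\min(a,b) < 0$). The size-support conditions from the two transition propositions then give $|\rho| \leq |\lambda| + 2j$, $|\sigma_0| \leq |\mu| + 2(a-j)$, and $|\tau_0| \leq |\nu| + 2(b-j)$, so only finitely many quadruples $(j,\rho,\sigma_0,\tau_0)$ contribute. Each of the finitely many factors stabilizes individually in $n$, so the whole sum does.

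The main obstacle I anticipate is not the algebraic manipulation itself, which is essentially bookkeeping once the two transition propositions are in hand, but rather coordinating the three separate stabilization thresholds: $M$-values stabilize past some threshold in $n$, $N$-values stabilize past some threshold in $n-j$, and Murnaghan stability requires $n-j$ to be large relative to $(\rho,\sigma_0,\tau_0)$. The support bounds in the third step are what reduce these to a common, finite threshold; one must establish those bounds \emph{before} invoking stabilization uniformly across the sum, so the logical order of steps matters even if none of them is individually delicate.
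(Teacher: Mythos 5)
Your proposal is correct and follows essentially the same route as the paper: expand $\tilde{s}_{\lambda[n]}$ in Schur functions via the $M$ transition, apply $\Delta^*$ there via Kronecker coefficients, return to the $\tilde{s}$ basis via the $N$ transition, and use the support bounds on $M$ and $N$ to reduce to finitely many $(j,\rho,\sigma_0,\tau_0)$ before invoking Murnaghan stability. The ordering caveat you raise at the end is exactly the point the paper handles by extracting the coefficient and establishing finiteness of the index set before passing to reduced Kronecker coefficients.
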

\begin{proof}
Fix $i \geq 0$. By the definition of $M_{\lambda, \rho}^{(i)}(n)$, we have
\[
\Delta^*(\tilde{s}_{\lambda[n]}) = \sum_{i \geq 0} \sum_{\rho} M_{\lambda, \rho}^{(i)}(n) \Delta^*(s_{\rho[n-i]}).
\]
Since $\Delta^*$ acts by Kronecker coefficients in the Schur function basis,
\[
\sum_{\rho} M_{\lambda, \rho}^{(i)}(n) \Delta^*(s_{\rho[n-i]}) = \sum_{\rho} M_{\lambda, \rho}^{(i)}(n) \sum_{\sigma, \tau} k_{\sigma[n-i], \tau[n-i]}^{\rho[n-i]} s_{\sigma[n-i]} \otimes s_{\tau[n-i]},
\]
where we sum over $\rho, \sigma, \tau$ such that $\rho[n-i], \sigma[n-i], \tau[n-i]$ are defined. Now we use $N_{\mu, \lambda}^{(j)}(n)$ to change basis again and obtain an expression for $\Delta^*$ in the $\tilde{s}_{\lambda[n]}$ basis:
\[
\Delta^*(\tilde{s}_{\lambda[n]}) = \sum_{i \geq 0} \sum_{\rho} M_{\lambda, \rho}^{(i)}(n) \sum_{\sigma, \tau} k_{\sigma[n-i], \tau[n-i]}^{\rho[n-i]} \sum_{\mu, a \geq i} N_{\sigma, \mu}^{(a-i)}(n-i) \tilde{s}_{\mu[n-a]} \otimes \sum_{\nu, b \geq i} N_{\tau, \nu}^{(b-i)}(n-i) \tilde{s}_{\nu[n-b]}.
\]
In particular, we have
\[
R_{\mu[n-a], \nu[n-b]}^{\lambda[n]} =
\sum_{i \geq 0} \sum_{\rho} M_{\lambda, \rho}^{(i)}(n) \sum_{\sigma, \tau} k_{\sigma[n-i], \tau[n-i]}^{\rho[n-i]} \sum_{\mu} N_{\sigma, \mu}^{(a-i)}(n-i) \sum_{\nu} N_{\tau, \nu}^{(b-i)}(n-i).
\]
Now we observe that when $n$ is large, the only nonzero terms in this sum occur when $a-i \geq 0$, $b-i \geq 0$ $i \geq 0$, $||\sigma|- |\mu|| \leq 2(a-i)$, $||\tau|- |\nu|| \leq 2(b-i)$, and $||\rho|- |\lambda|| \leq 2i$. These conditions in particular imply
\begin{eqnarray*}
|\rho| &\leq& |\lambda| +2\min(a,b)\\
|\sigma| &\leq& |\mu| + 2a\\
|\tau| &\leq& |\nu| + 2b.
\end{eqnarray*}
Thus there are only finitely many $\rho, \sigma, \tau$ that are relevant to the sum. For each triple $\rho, \sigma, \tau$ we invoke the stability of Kronecker coefficients to get
\[
\lim_{n \to \infty} R_{\mu[n-a], \nu[n-b]}^{\lambda[n]} =
\sum_{i \leq \min(a,b)} \sum_{\rho} M_{\lambda, \rho}^{(i)} \sum_{\sigma, \tau} \tilde{k}_{\sigma, \tau}^{\rho} \sum_{\mu} N_{\sigma, \mu}^{(a-i)} \sum_{\nu} N_{\tau, \nu}^{(b-i)},
\]
to finally eliminate dependence on $n$, proving that the limit exists and is finite.
\end{proof}
\noindent
Finally, we rephrase this in terms of restriction multiplicities.
\begin{corollary}
For any $a,b$ we have that
\[
\lim_{n \to \infty} \lim_{p,q \to \infty} \dim\hom_{S_p \times S_q}\left(S^{\mu[n-a][p]} \otimes S^{\nu[n-b][q]}, \Res{S_p \times S_q}{S_{pq}} \left( S^{\lambda[n][pq]} \right) \right)
\]
exists and is finite, where $\rho[r][s] = (r - |\rho|, \rho)[s] = (s-r, r-|\rho|, \rho)$.
\end{corollary}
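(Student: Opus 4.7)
The plan is to reduce the corollary to the two theorems already established, by interpreting the inner limit via Theorem \ref{restriction_coefficient_theorem} and then invoking Theorem \ref{R_stability_theorem} for the outer limit. Since the two theorems match the statement quite closely, the work is almost entirely bookkeeping: I need to verify that the size hypothesis $m,n \geq 2|\lambda|$ in Theorem \ref{restriction_coefficient_theorem} is satisfied after substituting $\lambda \to \lambda[n]$.

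First I would fix the integers $a,b$ and the partitions $\mu,\nu,\lambda$, and fix $n$ large enough that $\mu[n-a]$, $\nu[n-b]$, and $\lambda[n]$ are all defined. Applying Theorem \ref{restriction_coefficient_theorem} with its $\lambda$ replaced by $\lambda[n]$, its $\mu$ by $\mu[n-a]$, its $\nu$ by $\nu[n-b]$, and its $m,n$ by $p,q$, the hypothesis becomes $p,q \geq 2|\lambda[n]| = 2n$, which is satisfied once $p$ and $q$ are sufficiently large (with $n$ fixed). Since $(\lambda[n])[pq] = \lambda[n][pq]$ and likewise for the other two partitions, the theorem yields
\[
\dim\hom_{S_p \times S_q}\!\left(S^{\mu[n-a][p]} \otimes S^{\nu[n-b][q]},\, \Res{S_p \times S_q}{S_{pq}} \!\left( S^{\lambda[n][pq]} \right) \right) = R_{\mu[n-a],\, \nu[n-b]}^{\lambda[n]}
\]
for all $p,q \geq 2n$. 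In particular the inner double limit exists and equals $R_{\mu[n-a], \nu[n-b]}^{\lambda[n]}$, independently of how $p,q \to \infty$.

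Next I would apply Theorem \ref{R_stability_theorem} to the integers $a,b$ and the partitions $\mu, \nu, \lambda$, which asserts precisely that $\lim_{n \to \infty} R_{\mu[n-a],\, \nu[n-b]}^{\lambda[n]}$ exists and is finite. Combining this with the identification of the inner limit from the previous step yields the statement of the corollary.

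The main (and rather mild) obstacle is just to confirm that the double-bracket notation $\rho[r][s]$ parses correctly with the single-bracket convention of Theorem \ref{restriction_coefficient_theorem}: one has to check that the partition playing the role of ``$\lambda$'' in that theorem is $\lambda[n]$, whose size is $n$ (not $|\lambda|$), so that the appropriate lower bound on $p,q$ is $2n$ rather than $2|\lambda|$. Once this is in place, all the substantive content — the identification of $R_{\mu,\nu}^\lambda$ with a restriction multiplicity, and the stability in $n$ of these structure constants — has already been proved.
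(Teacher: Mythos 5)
Your argument is correct and is exactly the intended one: the paper states this corollary without proof precisely because it follows by combining Theorem \ref{restriction_coefficient_theorem} (identifying the inner limit with $R_{\mu[n-a],\nu[n-b]}^{\lambda[n]}$ once $p,q \geq 2n$) with Theorem \ref{R_stability_theorem} (stability of that quantity in $n$). Your bookkeeping check that the relevant bound is $2|\lambda[n]| = 2n$ rather than $2|\lambda|$ is the right point to verify, and it goes through as you say.
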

\noindent
It would be interesting to find a categorical interpretation of this two-row stability pattern similar to how the Deligne category explains the one-row stability pattern in Remark \ref{deligne_category_remark}.

\bibliographystyle{alpha}
\bibliography{ref.bib}

\begin{thebibliography}{OZ21b}

\bibitem[AS20]{AssafSpeyer}
Sami Assaf and David Speyer.
\newblock Specht modules decompose as alternating sums of restrictions of schur
  modules.
\newblock {\em Proceedings of the American Mathematical Society},
  148(3):1015--1029, 2020.

\bibitem[CO09]{ComesOstrik}
Jonathan Comes and Victor Ostrik.
\newblock On blocks of deligne's category rep (s\_t).
\newblock {\em arXiv preprint arXiv:0910.5695}, 2009.

\bibitem[Del07]{Deligne}
Pierre Deligne.
\newblock La cat{\'e}gorie des repr{\'e}sentations du groupe sym{\'e}trique st,
  lorsque t n’est pas un entier naturel.
\newblock {\em Algebraic groups and homogeneous spaces}, 19:209--273, 2007.

\bibitem[EA16]{Entova-Aizenbud}
Inna Entova-Aizenbud.
\newblock Deligne categories and reduced kronecker coefficients.
\newblock {\em Journal of Algebraic Combinatorics}, 44(2):345--362, 2016.

\bibitem[Gay76]{Gay}
David~A. Gay.
\newblock Characters of weyl group of $su(n)$ on zero weight spaces and
  centralizers of permutation representations.
\newblock {\em Rocky Mountain J. Math.}, 6(3):449--456, 09 1976.

\bibitem[GR20]{GrinbergReiner}
Darij Grinberg and Victor Reiner.
\newblock Hopf algebras in combinatorics, 2020.

\bibitem[Mac95]{Macdonald}
I.G. Macdonald.
\newblock {\em Symmetric Functions and Hall Polynomials}.
\newblock Oxford Classic Texts in the Physical Sciences. Clarendon Press, 1995.

\bibitem[MM08]{MaiaMendez}
Manuel Maia and Miguel M{\'e}ndez.
\newblock On the arithmetic product of combinatorial species.
\newblock {\em Discrete Mathematics}, 308(23):5407--5427, 2008.

\bibitem[Mur38]{Murnaghan}
Francis~D Murnaghan.
\newblock The analysis of the kronecker product of irreducible representations
  of the symmetric group.
\newblock {\em American journal of mathematics}, 60(3):761--784, 1938.

\bibitem[OZ21a]{OrellanaZabrocki2}
Rosa Orellana and Mike Zabrocki.
\newblock The {Hopf} structure of symmetric group characters as symmetric
  functions.
\newblock {\em Algebraic Combinatorics}, 4(3):551--574, 2021.

\bibitem[OZ21b]{OrellanaZabrocki}
Rosa Orellana and Mike Zabrocki.
\newblock Symmetric group characters as symmetric functions.
\newblock {\em Advances in Mathematics}, 390:107943, 2021.

\bibitem[Ryb19]{Ryba1}
Christopher Ryba.
\newblock Stable grothendieck rings of wreath product categories.
\newblock {\em Journal of Algebraic Combinatorics}, 49(3):267--307, 2019.

\bibitem[Ryb21]{Ryba2}
Christopher Ryba.
\newblock Littlewood complexes for symmetric groups.
\newblock {\em Representation Theory of the American Mathematical Society},
  25(20):594--605, 2021.

\bibitem[SF97]{Stanley}
R.P. Stanley and S.~Fomin.
\newblock {\em Enumerative Combinatorics, Volume 2}.
\newblock Cambridge Studies in Advanced Mathematics. Cambridge University
  Press, 1997.

\bibitem[SS15a]{SamSnowden3}
Steven~V Sam and Andrew Snowden.
\newblock Stability patterns in representation theory.
\newblock In {\em Forum of Mathematics, Sigma}, volume~3. Cambridge University
  Press, 2015.

\bibitem[SS15b]{SamSnowden2}
Steven~V Sam and Andrew Snowden.
\newblock Stability patterns in representation theory.
\newblock In {\em Forum of Mathematics, Sigma}, volume~3. Cambridge University
  Press, 2015.

\bibitem[SS16]{SamSnowden1}
Steven Sam and Andrew Snowden.
\newblock Gl-equivariant modules over polynomial rings in infinitely many
  variables.
\newblock {\em Transactions of the American Mathematical Society},
  368(2):1097--1158, 2016.

\bibitem[Tos21]{Tosteson}
Philip Tosteson.
\newblock Categorifications of rational hilbert series and characters of
  $fs^{op}$ modules.
\newblock {\em arXiv preprint arXiv:2105.01711}, 2021.

\end{thebibliography}

\end{document}